\documentclass[article,12pt]{amsart}

\usepackage{mathptmx}
\usepackage{amsmath,amsfonts,amssymb}
\usepackage{stmaryrd}
\usepackage{latexsym}

\usepackage{graphicx}
\usepackage{color}
\usepackage{epsfig}

\setcounter{MaxMatrixCols}{10}

\setlength{\oddsidemargin}{0.1cm}   
\setlength{\evensidemargin}{-0.1cm}  
\setlength{\topmargin}{-2.2cm}  
\setlength{\textwidth}{15.cm} 
\setlength{\textheight}{22.cm}

\numberwithin{equation}{section}
\theoremstyle{plain}
\newtheorem{thm}{Theorem}[section]
\newtheorem{rem}{Remark}[section]

\newtheorem{cor}{Corollary}[section]
\newtheorem{lem}{Lemma}[section]

\newcommand{\dF}{\mathbb{F}}
\newcommand{\dR}{\mathbb{R}}
\newcommand{\dE}{\mathbb{E}}
\newcommand{\dP}{\mathbb{P}}
\newcommand{\dZ}{\mathbb{Z}}

\newcommand{\cN}{\mathcal{N}}
\newcommand{\cX}{\mathcal{X}}
\newcommand{\cY}{\mathcal{Y}}
\newcommand{\cR}{\mathcal{R}}
\newcommand{\cS}{\mathcal{S}}

\newcommand{\cC}{\mathcal{C}}

\newcommand{\cH}{\mathcal{H}}

\newcommand{\cF}{\mathcal{F}}

\newcommand{\cM}{<\!\!M\!\!>}

\renewcommand{\leq}{\leqslant}
\renewcommand{\geq}{\geqslant}
\newcommand{\rI}{\mathrm{I}}
\newcommand{\ind}{\mbox{1}\kern-.25em \mbox{I}}
\font\calcal=cmsy10 scaled\magstep1
\def\build#1_#2^#3{\mathrel{\mathop{\kern 0pt#1}\limits_{#2}^{#3}}}
\def\liml{\build{\longrightarrow}_{}^{{\mbox{\calcal L}}}}

\def\videbox{\mathbin{\vbox{\hrule\hbox{\vrule height1ex \kern.5em
\vrule height1ex}\hrule}}}


\title[Concentration inequalities for self-normalized
martingales]{New insights on  concentration inequalities for self-normalized martingales}



\author{Bernard Bercu}
\address{
Universit\'e de Bordeaux, 
Institut de Math\'ematiques de Bordeaux, 
UMR 5251, 351 cours de la lib\'eration, 
33405 Talence cedex, France.}
\email{bernard.bercu@u-bordeaux.fr}
\author{Taieb Touati}
\address{Sorbonne Universit\'e, Laboratoire de Probabilit\'es Statistique et Mod\'elisation, UMR 8001,
Campus Pierre et Marie Curie,
4 Place Jussieu, 75005 Paris cedex, France}
\email{taieb.touati@upmc.fr}

\keywords{Concentration inequalities; Martingales; Autoregressive process; Statistical learning}
\subjclass[200]{Primary:60E15; 60G42; Secondary: 60G15; 60J80}

\begin{document}
\begin{abstract}
We propose new concentration inequalities for self-normalized martingales. The main idea is to introduce
a suitable weighted sum of the predictable quadratic variation and the total quadratic variation
of the martingale. It offers much more flexibility and allows us to improve previous concentration inequalities.
Statistical applications on autoregressive process, internal diffusion-limited aggregation process,
and online statistical learning are also provided.
\end{abstract}
\maketitle

\vspace{-2ex}

\section{Introduction}

Let $(M_n)$ be a locally square integrable real martingale adapted to a 
filtration $\dF=(\cF_n)$ with $M_0=0$. The predictable 
quadratic variation and the total quadratic variation of
$(M_n)$ are respectively given by
$$
\cM_n=\sum_{k=1}^n\dE[\Delta M_k^{2}|\mathcal{F}_{k-1}]
\hspace{1cm}\mbox{and}\hspace{1cm}
[M]_n=\sum_{k=1}^n\Delta M_k^{2}
$$
where $\Delta M_n=M_n -M_{n-1}$ with $\cM_0=0$ and $[M]_0=0$. Since the pionner work of
Azuma and Hoeffding \cite{Azuma1967}, \cite{Hoeffding1963}, a wide literature is available on  
concentration inequalities for martingales. We refer the reader to the recent books 
\cite{BDR2015}, \cite{BLM2013},  \cite{DLPKlassLai2007} where the celebrated Azuma-Hoeffding,  Freedman, Bernstein, and De la Pe{\~n}a 
inequalities are provided. Over the last two decades, there has been a renewed interest in this area of probability.
To be more precise, extensive studies have been made in order to establish
concentration inequalities for $(M_n)$ without boundedness assumptions
on its increments \cite{BercuTouati2008}, \cite{Delyon2009}, \cite{DVZ2001}, \cite{FGL2015}, \cite{Pinelis2014}, \cite{Rio2013b}. 
For example, it was established in \cite{BercuTouati2008} that for any positive $x$ and $y$,
\begin{equation}
\label{BT2008}
\dP(|M_n | \geq x,[M]_n\,+\cM_n \leq y)\leq 2 \exp\Bigl(-\frac{x^2}{2y}\Bigr).
\end{equation}
We shall improve inequality \eqref{BT2008} by showing that a special case of our inequalities leads, for any positive $x$ and $y$, to
\begin{equation}
\label{NCIMART8}
\dP(|M_n | \geq x,[M]_n\,+\cM_n \leq y)\leq 2 \exp\Bigl(-\frac{8x^2}{9y}\Bigr).
\end{equation}
Moreover, it was proven by Delyon  \cite{Delyon2009} that for any positive $x$ and $y$,
\begin{equation}
\label{D2009}
\dP(|M_n | \geq x,[M]_n+ 2\cM_n \leq y)\leq 2 \exp\Bigl(-\frac{3x^2}{2y}\Bigr).
\end{equation}
We will show that inequality \eqref{D2009} is a special case of a more general result involving a suitable
weighted sum of $[M]_n$ and $\cM_n$. Furthermore, it was shown by De la Pe{\~n}a and Pang \cite{DLPPang2009}
that for any positive $x$,
\begin{equation}
\label{DLPP2009}
\dP\Bigl(\frac{|M_n|}{\sqrt{[M]_n+ \cM_n+\dE[M_n^2]}}\geq x\sqrt{\frac{3}{2}}\Bigr)\leq  
\Bigl(\frac{2}{3} \Bigr)^{1/3} x^{-2/3} \exp\Bigl(-\frac{x^2}{2}\Bigr).
\end{equation}
We shall improve inequality \eqref{DLPP2009} by using of the tailor-made normalization
\begin{equation}
\label{DEFSNA}
S_n(a)= [M]_n+ c(a)\!\cM_n,
\end{equation}
where for any $a > 1/8$,
\begin{equation}
\label{DEFCA}
c(a) = \frac{2(1 - 2a +2\sqrt{a(a+1)})}{8a-1}.
\end{equation}
The novelty of our approach is that $S_n(a)$ is a suitable weighted sum $\cM_n$ and $[M]_n$. 
In the special case where $\cM_n=[M]_n$, $S_n(a)$ reduces to $S_n(a)=(1+c(a))\cM_n$
and the best choice of $a$ is clearly the one that minimizes $aS_n(a)=a(1+c(a))\cM_n$, that is $a=1/3$.
However, for small values of $n$, the behavior of $\cM_n$ may be totally different from that of $[M]_n$.
Consequently, our approach provides interesting concentration inequalities
in many situations where $\cM_n\neq [M]_n$.
The paper is organised as follows. Section \ref{S-MR} is devoted to our new concentration inequalities
for self-normalized martingales which improve some previous results of Bercu and Touati
\cite{BercuTouati2008}, Delyon \cite{Delyon2009} and De la Pe{\~n}a and Pang  \cite{DLPPang2009}.
Section \ref{S-SA} deals with statistical applications on autoregressive process, internal diffusion-limited aggregation process,
and online statistical learning. All technical proofs are postponed to the Appendices.
\ \vspace{-2ex} \\

\section{Main results}
\label{S-MR}

Our first result holds without any additional assumption on $(M_n)$.

\begin{thm}
\label{T-CIWA}
Let $(M_n)$ be a locally square integrable real martingale.
Then, as soon as $a>1/8$, we have for any positive $x$ and $y$,
\begin{equation}
\label{CIWA1}
\dP
(|M_n | \geq x,  S_n(a)\leq y)\leq 2 \exp\Bigl(-\frac{x^2}{2ay}\Bigr),
\end{equation}
where $S_n(a)= [M]_n+ c(a)\!\cM_n$ and $c(a)$ is given by \eqref{DEFCA}.
\end{thm}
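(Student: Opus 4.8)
The plan is to follow the classical route for self-normalized martingales: construct, for the prescribed weight $c(a)$, a one-parameter family of exponential supermartingales, apply Markov's inequality on the event in question, optimize the free scale, and symmetrize. First I would reduce to a one-sided estimate: since $(-M_n)$ is again a locally square integrable martingale with the same $[M]_n$ and $\cM_n$, it suffices to bound $\dP(M_n\geq x,\ S_n(a)\leq y)$ and then double the result.

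Fix $a>1/8$ and set $p=a/2$ together with $q=pc(a)$. For every $\lambda>0$ I would consider
$$V_n=\exp\Bigl(\lambda M_n-p\lambda^2[M]_n-q\lambda^2\cM_n\Bigr)$$
and show that it is a nonnegative supermartingale with $V_0=1$. Writing $\Delta\cM_k=\dE[\Delta M_k^2\mid\cF_{k-1}]$, the conditional increment is
$$\dE\Bigl[\tfrac{V_k}{V_{k-1}}\Big|\cF_{k-1}\Bigr]=e^{-q\lambda^2\Delta\cM_k}\,\dE\Bigl[e^{\lambda\Delta M_k-p\lambda^2\Delta M_k^2}\mid\cF_{k-1}\Bigr],$$
so the supermartingale property reduces, after using $\dE[\Delta M_k\mid\cF_{k-1}]=0$ and $1+t\leq e^t$, to the pointwise inequality $e^{\lambda u-p\lambda^2u^2}\leq 1+\lambda u+q\lambda^2u^2$ for all $u\in\dR$; with $w=\lambda u$ this is the scale-free statement $e^{w-pw^2}\leq 1+w+qw^2$.

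This last inequality is the heart of the matter and the step I expect to cost the most. I would prove it by studying $g(w)=(1+w+qw^2)e^{pw^2-w}$, whose derivative factors as $g'(w)=e^{pw^2-w}\,w\,Q(w)$ with $Q(w)=2pq\,w^2+(2p-q)w+(2p+2q-1)$. If $Q\geq 0$ on $\dR$, then $g'$ has the sign of $w$, so $g$ attains its global minimum $g(0)=1$ at the origin and the desired inequality follows. Since the leading coefficient $2pq$ is positive, $Q\geq0$ amounts to the discriminant condition $(2p+q)^2\leq 16pq(p+q)$; solving the borderline quadratic for $q$ and retaining the relevant root returns exactly $q=pc(a)$, with $c(a)$ as in \eqref{DEFCA}. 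The denominator $8a-1$ there is positive precisely when $a>1/8$, which is where the hypothesis and the square root $\sqrt{a(a+1)}$ originate.

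Granting the supermartingale property, the conclusion is routine. By construction $p\lambda^2[M]_n+q\lambda^2\cM_n=p\lambda^2\bigl([M]_n+c(a)\cM_n\bigr)=p\lambda^2 S_n(a)$, so on the event $\{M_n\geq x,\ S_n(a)\leq y\}$ one has $V_n\geq \exp(\lambda x-p\lambda^2 y)$. Markov's inequality then gives $\dP(M_n\geq x,\ S_n(a)\leq y)\leq \exp(-\lambda x+p\lambda^2 y)$, and minimizing the exponent over $\lambda>0$ at $\lambda=x/(2py)=x/(ay)$ produces $\exp(-x^2/(4py))=\exp(-x^2/(2ay))$. Adding the symmetric bound for $-M_n$ yields the factor $2$ and hence \eqref{CIWA1}. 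The only technical care needed beyond this is a standard localization (stopping times reducing to the genuinely square integrable case, followed by Fatou) to justify $\dE[V_n]\leq1$ for a merely locally square integrable martingale.
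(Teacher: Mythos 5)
Your proposal is correct and takes essentially the same route as the paper: your supermartingale $V_n$ with $p=a/2$ and $q=pc(a)=b(a)/2$ is exactly the paper's $V_n(t)$ from Lemma \ref{L-SUPERMG}, your pointwise bound $e^{w-pw^2}\leq 1+w+qw^2$ is the paper's Hermite inequality \eqref{HERMITE} established by the same sign/discriminant analysis of the same quadratic polynomial, and the Markov--optimize--symmetrize conclusion matches the argument in Appendix B. The only differences are cosmetic: you work with $g(w)=(1+w+qw^2)e^{pw^2-w}$ instead of its logarithm $\varphi_a$, and you derive $c(a)$ from the borderline discriminant condition rather than verifying the prescribed value.
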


\begin{rem}
The function $c$ is positive, strictly convex and $c(a)\sim 1/2a$ as $a$ tends to infinity.
Special values are given in Table 1.
\vspace{2ex}

\begin{center}
\begin{tabular}{|c||c|c|c|c|c|c|c|c|}
\hline
a & $9/55$ & $4/21$ &  $9/40$ & $25/96$ & $1/3$  & $9/16$ &  $49/72$ & $4/5$  \\
\hline
c(a)  & $10$ & $6$ &  $4$ & $3$ & $2$ & $1$  & $4/5$ & $2/3$  \\
\hline
\end{tabular}\\
\vspace{0.15cm}
Table 1. Special values of the function $c(a)$
\normalsize
\end{center}
\end{rem}

\begin{rem}
On the one hand, $c(a)=1$ if and only if $a=9/16$. 
Replacing the value $a=9/16$ into \eqref{CIWA1} immediately leads to \eqref{NCIMART8} as
$S_n(a)= [M]_n+ \!\cM_n$. On the other hand, $c(a)=2$ if and only if $a=1/3$. Hence,
in this special case, $S_n(a)= [M]_n+ 2\cM_n$ and we find again \eqref{D2009} by taking 
the value $a=1/3$ into \eqref{CIWA1}.
\end{rem}

\begin{rem}
Some recent results of Johansen and Nielsen \cite{Jo-Nielsen2016b}, \cite{Jo-Nielsen2016a}
on outlier detection algorithms and robust statistical methods for linear regressions
are based on the tail probability for the maximum of a family of martingales. This tail probability
follows from  inequality  \eqref{BT2008}. It should be interesting to improve the results in 
\cite{Jo-Nielsen2016b}, \cite{Jo-Nielsen2016a} by use of our new inequality \eqref{CIWA1},
see also \cite{BN2018}.
\end{rem}

\noindent
Our second result for self-normalized martingales is as follows.

\begin{thm}
\label{T-CISNWA}
Let $(M_n)$ be a locally square integrable real martingale.
Then, as soon as $a>1/8$, we have for any positive $x$ and $y$,
\begin{equation}
\label{CISNWA1}
\dP\left(\frac{|M_n|}{S_n(a)}\geq x, S_n(a)\geq y\right) \leq
2\exp\Bigl(-\frac{x^2y}{2a}\Bigr)
\end{equation}
where $S_n(a)= [M]_n+ c(a)\!\cM_n$ and $c(a)$ is given by \eqref{DEFCA}. Moreover, we also have
for any positive $x$,
\begin{equation} 
\label{CISNWA2}
\dP\Bigl(\frac{|M_n|}{S_n(a)} \geq  x \Bigr) \leq 
2 \inf_{p>1}\left(\dE\Bigl[\exp\Bigl(-\frac{(p-1)x^2S_n(a)}{2a}\Bigr)\Bigr]\right)^{1/p}\!\!\!\!\!.
\end{equation}
\end{thm}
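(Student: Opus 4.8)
My plan is to derive both inequalities from the single exponential supermartingale that already underlies Theorem~\ref{T-CIWA}. For fixed $\lambda\in\dR$, set
\[
Z_n(\lambda)=\exp\Bigl(\lambda M_n-\frac{a\lambda^2}{2}S_n(a)\Bigr),
\]
where $S_n(a)=[M]_n+c(a)\cM_n$. The value $c(a)$ in \eqref{DEFCA} is precisely the one guaranteeing the conditional bound $\dE[\exp(\lambda\Delta M_k-\tfrac{a\lambda^2}{2}\Delta M_k^2)\,|\,\cF_{k-1}]\leq\exp(\tfrac{ac(a)\lambda^2}{2}\dE[\Delta M_k^2|\cF_{k-1}])$, from which one deduces that $(Z_n(\lambda))$ is a positive supermartingale and hence $\dE[Z_n(\lambda)]\leq Z_0(\lambda)=1$ for every $\lambda$. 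I will take this property for granted, as it is exactly the input used to establish \eqref{CIWA1} (there with the optimizing choice $\lambda=x/(ay)$). The point is that both assertions of the theorem follow by feeding the \emph{same} supermartingale the self-normalizing choice $\lambda=x/a$.

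For \eqref{CISNWA1}, I would first treat the one-sided event $\{M_n\geq xS_n(a),\,S_n(a)\geq y\}$. On it, plugging $\lambda=x/a$ gives
\[
Z_n(x/a)=\exp\Bigl(\frac{x}{a}M_n-\frac{x^2}{2a}S_n(a)\Bigr)\geq\exp\Bigl(\frac{x^2}{2a}S_n(a)\Bigr)\geq\exp\Bigl(\frac{x^2y}{2a}\Bigr),
\]
where the first inequality uses $M_n\geq xS_n(a)$ and the second uses $S_n(a)\geq y$. Thus the event is contained in $\{Z_n(x/a)\geq\exp(x^2y/(2a))\}$, and Markov's inequality together with $\dE[Z_n(x/a)]\leq1$ yields the one-sided bound $\exp(-x^2y/(2a))$. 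Running the same reasoning for $-M_n$, which shares the same $[M]_n$ and $\cM_n$ and hence the same $S_n(a)$, and adding the two contributions produces the factor $2$ in \eqref{CISNWA1}.

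For \eqref{CISNWA2} the restriction $S_n(a)\geq y$ is dropped, so the now random threshold must be handled by a Hölder/mixture step rather than a plain Markov bound. Again on $A=\{M_n\geq xS_n(a)\}$ I would observe that $W:=\exp(\tfrac{x}{a}M_n-\tfrac{x^2}{a}S_n(a))=Z_n(x/a)\exp(-\tfrac{x^2}{2a}S_n(a))\geq1$, so $\ind_A\leq W^{\alpha}=Z_n(x/a)^{\alpha}\exp(-\tfrac{\alpha x^2}{2a}S_n(a))$ for any $\alpha\in(0,1)$. Taking expectations and applying Hölder with conjugate exponents $1/\alpha$ and $1/(1-\alpha)$ splits this as
\[
\dP(A)\leq\bigl(\dE[Z_n(x/a)]\bigr)^{\alpha}\Bigl(\dE\Bigl[\exp\Bigl(-\frac{\alpha}{1-\alpha}\,\frac{x^2}{2a}S_n(a)\Bigr)\Bigr]\Bigr)^{1-\alpha}.
\]
The first factor is at most $1$. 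Choosing $\alpha=(p-1)/p$ for $p>1$, so that $1-\alpha=1/p$ and $\alpha/(1-\alpha)=p-1$, converts the right-hand side into exactly $(\dE[\exp(-(p-1)x^2S_n(a)/(2a))])^{1/p}$. Symmetrizing in $M_n$ supplies the factor $2$, and taking the infimum over $p>1$ gives \eqref{CISNWA2}.

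The genuinely substantial ingredient is the supermartingale property $\dE[Z_n(\lambda)]\leq1$ (equivalently the optimality of $c(a)$), which I am importing from the proof of Theorem~\ref{T-CIWA}; the one delicate bookkeeping point beyond that is selecting the exponent $\alpha=(p-1)/p$ so that Hölder lands on precisely the stated inner coefficient $p-1$ and outer power $1/p$. A minor technical caveat is that $(M_n)$ is only \emph{locally} square integrable, so I would first run the argument along a localizing sequence of stopping times and pass to the limit by Fatou's lemma to secure $\dE[Z_n(\lambda)]\leq1$ at fixed $n$.
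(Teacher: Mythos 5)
Your proposal is correct, and it runs on exactly the same engine as the paper's proof: the exponential supermartingale of Lemma~\ref{L-SUPERMG}, which is your $Z_n(\lambda)$ since $aS_n(a)=a[M]_n+b(a)\!\cM_n$ with $b(a)=ac(a)$, evaluated at the same self-normalizing choice $t=x/a$. For \eqref{CISNWA2} your argument is the paper's argument in different clothing: your exponent $\alpha=(p-1)/p$ is the paper's $1/q$, and your H\"older split of $Z_n(x/a)^{\alpha}\exp\bigl(-\tfrac{\alpha x^2}{2a}S_n(a)\bigr)$ into $\bigl(\dE[Z_n(x/a)]\bigr)^{\alpha}$ times the negative-exponential moment is precisely what the paper does before taking the infimum over $p>1$. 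The one genuine difference is in \eqref{CISNWA1}: the paper writes the bound as $\dP(B_n^+)\leq \exp\bigl(-\tfrac{x^2y}{4a}\bigr)\dE\bigl[\sqrt{V_n(x/a)}\,\rI_{B_n^+}\bigr]$ and then invokes Cauchy--Schwarz to obtain the self-referential inequality $\dP(B_n^+)\leq \exp\bigl(-\tfrac{x^2y}{4a}\bigr)\sqrt{\dP(B_n^+)}$, which it solves; you instead note that on the event one has $Z_n(x/a)\geq \exp\bigl(\tfrac{x^2y}{2a}\bigr)$ and apply plain Markov. Both routes give exactly the constant $\exp\bigl(-\tfrac{x^2y}{2a}\bigr)$, but yours is the more economical: a single Markov step, no Cauchy--Schwarz, and it makes transparent that both halves of the theorem come from feeding the same supermartingale the same value $t=x/a$. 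Your closing localization caveat is harmless and consistent with the paper, whose Lemma~\ref{L-SUPERMG} already yields $\dE[V_n(t)]\leq 1$ in the locally square integrable setting.
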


\begin{rem}
In the special case $a=9/16$, we find from \eqref{CISNWA1} and \eqref{CISNWA2}
that for any positive $x$ and $y$,
\begin{equation*}
\dP\left(\frac{|M_n|}{[M]_n+ \cM_n}\geq x, [M]_n+ \cM_n\geq y\right) \leq
2\exp\Bigl(\!-\frac{8x^2y}{9}\Bigr),
\end{equation*}
\begin{equation*} 
\dP\Bigl(\frac{|M_n|}{[M]_n+ \cM_n}\geq x \Bigr)
\leq 2 \inf_{p>1}\left(\dE\Bigl[\exp\Bigl(-\frac{8(p-1)x^2}{9}\Bigl( [M]_n+ \cM_n\Bigr) \Bigr)\Bigr]\right)^{1/p}\!\!\!\!.
\end{equation*}
Similar concentration inequalities for self-normalized martingales can be obtained for $a=1/3$ as well as for other
values of $a>1/8$. In addition, via the same lines as in the proof of Theorem \ref{T-CISNWA}, it is easy to see
that for any positive $x$ and $y$,
\begin{equation}
\label{CISNWA3}
\dP\left(\frac{|M_n|}{\cM_n}\geq x, c(a)\!\cM_n\geq [M]_n+y\right) \leq
2\exp\Bigl(\!-\frac{x^2y}{2ac^2(a)}\Bigr),
\end{equation}
\begin{equation} 
\label{CISNWA4}
\dP\Bigl(\frac{|M_n|}{\cM_n}\geq x, [M]_n \leq  c(a) y\!\cM_n \Bigr)\!
\!\leq\! 2 \inf_{p>1}\!\left(\!\dE\Bigl[\exp\Bigl(-\frac{(p-1)x^2\!\cM_n}{2ac(a)(1+y)}\Bigr)\Bigr]\!\right)^{1/p}\!\!\!\!\!\!\!.
\end{equation}
\end{rem}

\noindent
Our third result deals with missing factors in exponential inequalities
for self-normalized martingales with upper bounds independent of $[M]_n$ or $\cM_n$.

\begin{thm}
\label{T-CISNMISS}
Let $(M_n)$ be a locally square integrable real martingale. Assume that $\dE[|M_n|^p] \!<\! \infty$ for some
$p \geq 2$. Then, as soon as $a\!>\!1/8$, we have for any positive $x$,
\begin{equation}
\label{CISNMISS1}
\dP\Bigl(\frac{|M_n|}{\sqrt{aS_n(a)+(\dE[|M_n|^p])^{2/p}}}\geq \frac{x}{\sqrt{B_q}}\Bigr)\leq  
C_q x^{-B_q} \exp\Bigl(-\frac{x^2}{2}\Bigr)
\end{equation}
where $q=p/(p-1)$ is the H\"older conjugate exponent of $p$,
$$
B_q= \frac{q}{2q-1}
\hspace{1cm}\text{and}\hspace{1cm}
C_q=\Bigl(  \frac{q}{2q-1} \Bigr)^{B_q/2}.
$$
In particular, for $p=2$, we have for any positive $x$,
\begin{equation}
\label{CISNMISS2}
\dP\Bigl(\frac{|M_n|}{\sqrt{aS_n(a)+\dE[M_n^2]}}\geq x\sqrt{\frac{3}{2}}\Bigr)\leq  
\Bigl(\frac{2}{3} \Bigr)^{1/3} x^{-2/3} \exp\Bigl(-\frac{x^2}{2}\Bigr).
\end{equation}
\end{thm}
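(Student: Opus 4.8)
My plan is to build everything on the fundamental Laplace-transform bound underlying Theorem~\ref{T-CIWA}: for $a>1/8$ and every $\lambda\in\dR$,
\begin{equation*}
\dE\Bigl[\exp\Bigl(\lambda M_n-\frac{a\lambda^2}{2}S_n(a)\Bigr)\Bigr]\leq 1,
\end{equation*}
which holds for both signs of $\lambda$ because $-M_n$ is a martingale with the same $S_n(a)$. Writing $B^2=aS_n(a)$, this is precisely the canonical assumption $\dE[\exp(\lambda M_n-\tfrac{\lambda^2}{2}B^2)]\leq1$ for all real $\lambda$. The first step is a \emph{method of mixtures}: I would fix a constant $c>0$, integrate the inequality above against the centred Gaussian density $\lambda\mapsto\sqrt{c/2\pi}\,\exp(-c\lambda^2/2)$, and exchange expectation and integral by Tonelli, the integrand being nonnegative. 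Completing the square in the resulting Gaussian integral produces the self-normalizing inequality
\begin{equation*}
\dE\Bigl[\sqrt{\tfrac{c}{B^2+c}}\,\exp\Bigl(\tfrac{M_n^2}{2(B^2+c)}\Bigr)\Bigr]\leq 1 .
\end{equation*}

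The decisive choice, which I expect to be the conceptual crux, is $c=D:=(\dE[|M_n|^p])^{2/p}$, finite by hypothesis and tailor-made so that every power of $D$ cancels at the very end. Fixing a threshold $t>0$ and setting $A=\{|M_n|\geq t\sqrt{B^2+D}\}$, on $A$ the exponential is at least $\exp(t^2/2)$, so the previous display gives $\dE[\sqrt{D/(B^2+D)}\,\ind_{A}]\leq\exp(-t^2/2)$. The main technical obstacle is that the random weight $\sqrt{D/(B^2+D)}$ cannot simply be discarded. I would tame it using the defining event itself: on $A$ one has $B^2+D\leq M_n^2/t^2$, hence $\sqrt{D/(B^2+D)}\geq t\sqrt{D}/|M_n|$, which yields
\begin{equation*}
t\sqrt{D}\,\dE\Bigl[\frac{1}{|M_n|}\ind_{A}\Bigr]\leq\exp\Bigl(-\frac{t^2}{2}\Bigr).
\end{equation*}

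Finally I would recover $\dP(A)$ from $\dE[|M_n|^{-1}\ind_{A}]$ by interpolating the log-convex map $s\mapsto\dE[|M_n|^s\ind_{A}]$ between the exponents $-1$ and $p$ at the intermediate value $0=\theta(-1)+(1-\theta)p$, i.e.\ $\theta=p/(p+1)$. Hölder's inequality then gives $\dP(A)\leq\dE[|M_n|^{-1}\ind_{A}]^{p/(p+1)}\dE[|M_n|^p\ind_{A}]^{1/(p+1)}$; bounding the last factor by $\dE[|M_n|^p]=D^{p/2}$ and inserting the previous estimate makes all powers of $D$ cancel, leaving
\begin{equation*}
\dP(A)\leq t^{-p/(p+1)}\exp\Bigl(-\frac{p}{p+1}\cdot\frac{t^2}{2}\Bigr).
\end{equation*}
Since $q=p/(p-1)$ yields $p/(p+1)=q/(2q-1)=B_q$, the substitution $t=x/\sqrt{B_q}$ converts this into $C_qx^{-B_q}\exp(-x^2/2)$ with $C_q=B_q^{B_q/2}$, and the case $p=2$ (so $q=2$, $B_q=2/3$) specialises to \eqref{CISNMISS2}. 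I expect the genuine difficulty to lie entirely in the middle step — the choice $c=D$ together with the event-based control of the random weight — while the Gaussian mixture and the closing interpolation are, once set up, routine calculus.
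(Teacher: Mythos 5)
Your proposal is correct; I checked each step. The starting bound $\dE[\exp(\lambda M_n-\tfrac{\lambda^2}{2}aS_n(a))]\leq 1$ for all real $\lambda$ is exactly Lemma \ref{L-SUPERMG} (since $b(a)=ac(a)$, so $a[M]_n+b(a)\!\cM_n=aS_n(a)$); the Gaussian-mixture identity, the Tonelli exchange, the lower bound $\sqrt{D/(B^2+D)}\geq t\sqrt{D}/|M_n|$ on the event $A$, and the H\"older interpolation $\dP(A)\leq \bigl(\dE[|M_n|^{-1}\ind_A]\bigr)^{p/(p+1)}\bigl(\dE[|M_n|^p]\bigr)^{1/(p+1)}$ with the cancellation of all powers of $D=(\dE[|M_n|^p])^{2/p}$ are all valid, and the arithmetic $p/(p+1)=q/(2q-1)=B_q$, $C_q=B_q^{B_q/2}$ matches \eqref{CISNMISS1}, with the $p=2$ case giving \eqref{CISNMISS2}.

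The difference with the paper is one of self-containedness rather than of substance. The paper's proof of Theorem \ref{T-CISNMISS} is two lines: it observes that the pair $(A_n,B_n)=(M_n,\sqrt{aS_n(a)})$ satisfies the canonical assumption $\dE[\exp(tA_n-\tfrac{t^2}{2}B_n^2)]\leq 1$, and then invokes Theorem 2.1 of de la Pe{\~n}a and Pang \cite{DLPPang2009} verbatim; the constants $B_q$ and $C_q$ and the additive term $(\dE[|M_n|^p])^{2/p}$ are imported from that reference. What you have done is reconstruct the proof of that cited theorem: the Gaussian method of mixtures, the tailor-made centering constant $c=(\dE[|M_n|^p])^{2/p}$, and the event-based control of the random weight followed by H\"older interpolation between the exponents $-1$ and $p$ are precisely the pseudo-maximization argument of \cite{DLPPang2009} (see also \cite{DLPKlassLai2007}). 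Your route therefore buys a fully self-contained argument that makes transparent where the missing factor $x^{-B_q}$ and the moment term come from, at the cost of reproving a known result; the paper's route is shorter exactly because it delegates this work to the reference. One cosmetic remark: your appeal to $-M_n$ to get both signs of $\lambda$ is redundant, since Lemma \ref{L-SUPERMG} is already stated for all $t\in\dR$.
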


\begin{rem}
In the special case $a=9/16$, we saw that $S_n(a)=[M]_n+ \cM_n$. Hence,
we deduce from \eqref{CISNMISS2} that for any positive $x$, 
\begin{equation*}
\dP\Bigl(\frac{|M_n|}{\sqrt{a([M]_n+ \cM_n)+\dE[M_n^2]}}\geq x\sqrt{\frac{3}{2}}\Bigr)\leq  
\Bigl(\frac{2}{3} \Bigr)^{1/3} x^{-2/3} \exp\Bigl(-\frac{x^2}{2}\Bigr).
\end{equation*}
Since $a<1$, this inequality clearly leads to
\begin{equation*}
\dP\Bigl(\frac{|M_n|}{\sqrt{[M]_n+ \cM_n+\dE[M_n^2]}}\geq x\sqrt{\frac{3}{2}}\Bigr)\leq  
\Bigl(\frac{2}{3} \Bigr)^{1/3} x^{-2/3} \exp\Bigl(-\frac{x^2}{2}\Bigr).
\end{equation*}
Consequently, in the special case $a=9/16$, \eqref{CISNMISS2}
provides a tighter upper bound than inequality (3.2) in \cite{DLPPang2009}.
Moreover, in the special case  $a=1/3$, we also saw that $S_n(a)= [M]_n+ 2\cM_n$.
Hence, we obtain from \eqref{CISNMISS2} that for any positive $x$, 
\begin{equation*}
\dP\Bigl(\frac{|M_n|}{\sqrt{[M]_n+2\cM_n+3\dE[M_n^2]}}\geq \frac{x}{\sqrt{2}}\Bigr)\leq  
\Bigl(\frac{2}{3} \Bigr)^{1/3} x^{-2/3} \exp\Bigl(-\frac{x^2}{2}\Bigr)
\end{equation*}
which is exactly inequality (3.64) in \cite{BDR2015}.
\end{rem}

\begin{proof}
The proofs are given in Appendices A and B.
\end{proof}

\section{Statistical applications}
\label{S-SA}


\subsection{Autoregressive process}


Consider the first-order autoregressive process given,
for all $n\geq 1$, by
\begin{equation}
\label{AR}
X_{n}=\theta X_{n-1} + \varepsilon_{n}
\end{equation}
where $X_n$ and $\varepsilon_n$ are the observation and the driven noise of the process, respectively. Assume that $(\varepsilon_n)$ is a sequence 
of independent and identically distributed random variables sharing the same
$\cN(0,\sigma^2)$ distribution where $\sigma^2>0$. The process is said to be stable if $| \theta|< 1$, unstable if $| \theta|= 1$, and explosive if $| \theta|> 1$.
We estimate the unknown parameter $\theta$ by the standard least-squares  estimator given, for all $n\geq 1$, by
\begin{equation}
\label{LSAR}
\widehat{\theta}_n=\frac{\sum_{k=1}^nX_{k-1}X_k}{\sum_{k=1}^nX_{k-1}^2}.
\end{equation}
It is well-known that whatever the value of $\theta$ is, $\widehat{\theta}_n$ converges almost surely to $\theta$. Moreover, 
White \cite{White1958} has shown that in the stable case $| \theta|< 1$,
$$
 \sqrt{n} \bigl( \widehat{\theta}_n - \theta \bigr) \liml \cN (0, 1-\theta^2),
$$
while in the explosive case $| \theta|> 1$ with initial value $X_0=0$,
$$
|\theta|^n  \bigl( \widehat{\theta}_n - \theta \bigr) \liml (\theta^2 -1) \cC
$$
where $\cC$ stands for the Cauchy distribution. Furthermore, in the stable case $| \theta|< 1$,
it was proven in \cite{BercuGamboaRouault1997} that the sequence $(\widehat{\theta}_n )$ satisfies a large deviation principle with a convex-concave rate function,
see also \cite{Bercu2001}.  A fairly simple concentration inequality  
for the estimator $\widehat{\theta}_n$ was established in \cite{BercuTouati2008}, whatever the value of $\theta$ is. More precisely,
assume that $X_0$ is independent of $(\varepsilon_n)$ with $\cN(0,\tau^2)$ distribution where $\tau^2 \geq \sigma^2$. Then,
for all $n\geq 1$ and for any positive $x$,  we have
\begin{equation}
\label{APPLARLS}
\dP(|\widehat{\theta}_n-\theta| \geq x)
\leq 
2\exp\Bigl(-\frac{nx^2}{2(1+y_x)}\Bigr)
\end{equation}
where $y_x$ is the unique positive solution of 
the equation $h(y_x)=x^2$ and $h$ is the function given, for any positive $x$, by 
$h(x)=(1+x)\log(1+x) -x$. It follows from \eqref{APPLARLS} that, as soon as
$0<x<1/2$,
$$
\dP\bigl(|\widehat{\theta}_n-\theta| \geq x\bigr)
\leq 
2\exp\Bigl(-\frac{nx^2}{2(1+2x)}\Bigr).
$$
The situation in which $(\varepsilon_n)$ is not normally distributed, is much more difficult to handle. If $(\varepsilon_n)$
is a sequence of independent and identically distributed random variables, uniformly bounded with symmetric distribution, we can
use De la Pe$\tilde{\mbox{n}}$a's inequality \cite{DLP1999} for self-normalized conditionally symmetric martingales, 
to prove concentration inequalities for the least-squares estimator, see \cite{BDR2015}. Our motivation is to establish concentration inequalities 
for $\widehat{\theta}_n$ in the situation where the distribution of $(\varepsilon_n)$ is non-symmetric.

\begin{cor} 
\label{C-AR}
Assume that $(\varepsilon_n)$
is a sequence of independent and identically distributed random variables such that, for all $n \geq 1$,
$$
\varepsilon_n= 
 \left \{ \begin{array}{ccc}
   \,\, 2q & \text{with probability} & p,  \vspace{1ex} \\
   \!\! -2p & \text{with probability} & q,
   \end{array} \nonumber \right.
$$
where $p\in ]0,1/2]$ and $q=1-p$. Moreover, assume that $X_0$ is independent of $(\varepsilon_n)$ with
$|X_0| \geq 2p$. Then, for any $a>1/8$ and for any $x$ in the interval $[0, \sqrt{ad(a)}]$,
we have
\begin{equation}
\label{INEGLS}
\dP\bigl(|\widehat{\theta}_n-\theta| \geq x\bigr)
\leq 
2\exp\Bigl(-\frac{np^2x^2}{a d(a)}\Bigr),
\end{equation}
where
\begin{equation}
\label{DEFDA}
d(a)=\frac{4\bigl(q^2+pq c(a)\bigr)^2}{\bigl(p^2+pq c(a)\bigr)}.
\end{equation}
\end{cor}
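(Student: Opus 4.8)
The plan is to realise $\widehat{\theta}_n-\theta$ as a self-normalised martingale and then feed a sharp Laplace-transform estimate into \eqref{CISNWA2}. Since $X_k-\theta X_{k-1}=\varepsilon_k$, the least-squares error reads
$$\widehat{\theta}_n-\theta=\frac{\sum_{k=1}^n X_{k-1}\varepsilon_k}{\sum_{k=1}^n X_{k-1}^2}=\frac{M_n}{\Sigma_n},$$
where $M_n=\sum_{k=1}^n X_{k-1}\varepsilon_k$ is a square integrable martingale for $\cF_k=\sigma(X_0,\varepsilon_1,\dots,\varepsilon_k)$ and $\Sigma_n=\sum_{k=1}^n X_{k-1}^2$. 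As $\dE[\varepsilon_k]=0$ and $\dE[\varepsilon_k^2]=4pq$, we get $\Delta M_k=X_{k-1}\varepsilon_k$, $\cM_n=4pq\,\Sigma_n$ and $[M]_n=\sum_{k=1}^n X_{k-1}^2\varepsilon_k^2$, whence $S_n(a)=\sum_{k=1}^n X_{k-1}^2 w_k$ with $w_k=\varepsilon_k^2+4pq\,c(a)\in\{\beta,\gamma\}$, where $\beta=4(q^2+pqc(a))$ and $\gamma=4(p^2+pqc(a))$. The backbone of the argument will be the deterministic sandwich $\gamma\,\Sigma_n\leq S_n(a)\leq\beta\,\Sigma_n$, and I record for later that $d(a)=\beta^2/\gamma$ by \eqref{DEFDA}.

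First I would reduce to a self-normalised tail. Because $S_n(a)\leq\beta\,\Sigma_n$, the event $\{|\widehat{\theta}_n-\theta|\geq x\}=\{|M_n|\geq x\,\Sigma_n\}$ is contained in $\{|M_n|\geq(x/\beta)S_n(a)\}$, so \eqref{CISNWA2} applied with the ratio $x/\beta$ (renaming its dummy exponent $r$) gives
$$\dP(|\widehat{\theta}_n-\theta|\geq x)\leq\dP\Bigl(\frac{|M_n|}{S_n(a)}\geq\frac{x}{\beta}\Bigr)\leq 2\inf_{r>1}\Bigl(\dE\Bigl[\exp\Bigl(-\frac{(r-1)x^2 S_n(a)}{2a\beta^2}\Bigr)\Bigr]\Bigr)^{1/r}.$$
The whole problem is thereby transferred to an upper bound for the Laplace transform of $S_n(a)$, and since $S_n(a)\geq\gamma\,\Sigma_n$ it suffices to control $\dE[\exp(-\mu\,\Sigma_n)]$ for $\mu>0$.

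The key step, and the one I expect to be the main obstacle, is the uniform per-step estimate
$$\dE\bigl[\exp(-\mu X_j^2)\,\big|\,\cF_{j-1}\bigr]\leq\exp(-4p^2\mu),\qquad 0\leq\mu\leq\tfrac12.$$
Conditionally on $\cF_{j-1}$ the variable $X_j=\theta X_{j-1}+\varepsilon_j$ takes only the two values $\theta X_{j-1}+2q$ and $\theta X_{j-1}-2p$, which are a fixed distance $2$ apart, so $X_j^2$ cannot concentrate near $0$; with $v=\theta X_{j-1}$ the claim is
$$\sup_{v\in\dR}\bigl(p\,e^{-\mu(v+2q)^2}+q\,e^{-\mu(v-2p)^2}\bigr)\leq e^{-4p^2\mu},$$
and the restriction $\mu\leq 1/2$ is exactly what is needed, the bound degrading for large $\mu$ in the worst configuration where one of the two values vanishes. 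I would establish this by a direct one-variable analysis of the left-hand side. Granting it, I condition successively on $\cF_{n-2},\cF_{n-3},\dots,\cF_0$ to peel off one factor $e^{-4p^2\mu}$ at each of the $n-1$ steps $j=1,\dots,n-1$, and then use the hypothesis $|X_0|\geq 2p$ to bound the remaining deterministic factor $\dE[e^{-\mu X_0^2}]\leq e^{-4p^2\mu}$. This yields $\dE[\exp(-\mu\,\Sigma_n)]\leq\exp(-4p^2\mu\,n)$ for $0\leq\mu\leq 1/2$.

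To conclude I would not optimise but simply evaluate the infimum at $r=2$. With $r=2$ the rate inside the expectation is $x^2/(2a\beta^2)$, so setting $\mu=x^2\gamma/(2a\beta^2)=x^2/(2a\,d(a))$ and combining $S_n(a)\geq\gamma\,\Sigma_n$ with the Laplace bound gives
$$\dE\Bigl[\exp\Bigl(-\frac{x^2 S_n(a)}{2a\beta^2}\Bigr)\Bigr]\leq\dE[\exp(-\mu\,\Sigma_n)]\leq\exp(-4p^2\mu\,n)=\exp\Bigl(-\frac{2np^2x^2}{a\,d(a)}\Bigr),$$
which is valid precisely when $\mu\leq 1/2$, that is when $x\leq\sqrt{a\,d(a)}$, the stated range. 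Taking the power $1/2$ and reinstating the factor $2$ yields $\dP(|\widehat{\theta}_n-\theta|\geq x)\leq 2\exp(-np^2x^2/(a\,d(a)))$, which is \eqref{INEGLS}. The only non-routine ingredient is the uniform conditional Laplace estimate; everything else is the deterministic sandwich $\gamma\,\Sigma_n\leq S_n(a)\leq\beta\,\Sigma_n$ together with \eqref{CISNWA2}.
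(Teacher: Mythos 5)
Your proposal is structurally the same proof as the paper's: the same martingale $M_n=\sum_{k=1}^n X_{k-1}\varepsilon_k$, the same deterministic sandwich of $S_n(a)$ between constant multiples of $\Sigma_n=\sum_{k=1}^n X_{k-1}^2$ (your $\beta$ and $\gamma$ are exactly the paper's $\sigma^2 d_{max}(a)$ and $\sigma^2 d_{min}(a)$, and your identity $d(a)=\beta^2/\gamma$ matches \eqref{DEFDA}), the same application of \eqref{CISNWA2} with H\"older exponent $2$, the same iterated conditional Laplace bound $\dE[\exp(-\mu\Sigma_n)]\leq\exp(-4np^2\mu)$ for $0\leq\mu\leq 1/2$ using $|X_0|\geq 2p$ for the initial term, and the same choice of $\mu$ producing the range $x\leq\sqrt{ad(a)}$ and the bound \eqref{INEGLS}. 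Every step you actually carry out is correct.

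The one thing you do not carry out is precisely the crux, and calling it ``a direct one-variable analysis'' understates it. You must show
$$
\sup_{v\in\dR}\Bigl(p\,e^{-\mu(v+2q)^2}+q\,e^{-\mu(v-2p)^2}\Bigr)\leq e^{-4p^2\mu},
\qquad 0\leq\mu\leq\tfrac{1}{2},
$$
and a frontal attack is unpleasant: the critical points of this mixture of two Gaussian bumps solve a transcendental equation, and the naive bound which takes the supremum of each term separately gives $p+q=1$, which is useless; the whole content is that the two bumps, centered a distance $2$ apart, cannot both be large at the same $v$. The paper does this step by factoring $\dE[\exp(tX_n^2)|\cF_{n-1}]=\exp(t\theta^2X_{n-1}^2)\Lambda_{n-1}(t)$ and applying Kearns--Saul's inequality $p\exp(qs)+q\exp(-ps)\leq\exp(\varphi(p)s^2/4)$, where $\varphi(p)=(q-p)/\log(q/p)\leq 1/2$, with $s=4\theta tX_{n-1}$; the resulting quadratic $4\varphi(p)t^2\theta^2X_{n-1}^2$ is then absorbed by the negative term $t\theta^2X_{n-1}^2$ exactly when $|t|\leq 1/2$, which is where your restriction $\mu\leq 1/2$ (hence $x\leq\sqrt{ad(a)}$) comes from. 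Your asserted estimate is true, so your architecture is sound; but to turn the proposal into a complete proof you need to either invoke Kearns--Saul as the paper does, or genuinely prove the displayed supremum bound rather than defer it.
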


\begin{rem}
In the symmetric case $p=1/2$, $\cM_n=[M]_n$, $S_n(a)=(1+c(a))\cM_n$
and $d(a)$ reduces to $d(a)=1+c(a)$. Hence, if $a=1/3$, $c(a)=2$ and $d(a)=3$.
Consequently, we deduce from \eqref{INEGLS} that for any $x$ in $[0, 1]$, 
$$
\dP\bigl(|\widehat{\theta}_n-\theta| \geq x\bigr)
\leq 
2\exp\Bigl(-\frac{nx^2}{4}\Bigr).
$$
Moreover, in the nonsymmetric case $p\neq 1/2$, we have almost surely
$$\frac{p}{q}\cM_n \leqslant [M]_n \leqslant \frac{q}{p} \cM_n.$$ 
For example, if $p=1/3$ and $a=9/16$, $c(a)=1$ and $d(a)=16/3$ which implies that $ad(a)=3$
Therefore, we obtain from \eqref{INEGLS} that for any $x$ in $[0,\sqrt{3}]$, 
$$
\dP\bigl(|\widehat{\theta}_n-\theta| \geq x\bigr)
\leq 
2\exp\Bigl(-\frac{nx^2}{27}\Bigr).
$$
\end{rem}

\begin{proof}
It immediately follows from \eqref{AR} together
with \eqref{LSAR} that for all $n \geq 1$,
\begin{equation}
\label{LSMART}
\widehat{\theta}_n-\theta=\sigma^2\frac{M_n}{\cM_n}
\end{equation}
where $\sigma^2=4pq$ and $(M_n)$ is the locally square integrable real martingale given by
\begin{equation*}
M_n=\sum_{k=1}^nX_{k-1}\varepsilon_k,
\hspace{1cm}
\cM_n=\sigma^2\sum_{k=1}^nX_{k-1}^2,
\hspace{1cm}
[M]_n= \sum_{k=1}^nX_{k-1}^2\varepsilon_k^2.
\end{equation*}
We clearly have 
$d_{min}(a)\cM_n \leq S_n(a) \leq d_{max}(a) \cM_n$
where 
$$
d_{min}(a)= \frac{p}{q} +c(a)
\hspace{1cm}
\mbox{and}
\hspace{1cm}
d_{max}(a)= \frac{q}{p} +c(a).
$$
Hereafter, we obtain from \eqref{CISNWA2} that for any $a>1/8$ and for any $x>0$,
\begin{equation} 
\label{CIAR1}
\dP\bigl( |M_n|  \geq  x S_n(a) \bigr) \leq 
2 \left(\dE\Bigl[\exp\Bigl(-\frac{x^2S_n(a)}{2a}\Bigr)\Bigr]\right)^{1/2}
\end{equation}
which implies via \eqref{LSMART} that for any $x>0$,
\begin{equation} 
\label{CIAR2}
\dP \bigl( |\widehat{\theta}_n-\theta | \geq x  \bigr) \leq 
2 \left(\dE\Bigl[\exp\Bigl(-\frac{x^2  \cM_n}{2a  \sigma^2 d(a)}\Bigr)\Bigr]\right)^{1/2}
\end{equation}
where $d(a)$ is given by \eqref{DEFDA}.
It only remains to find a suitable upper-bound for the Laplace transform of $\cM_n$. We have from \eqref{AR} that
$X_n^2=\theta^2 X_{n-1}^2 + 2 \theta X_{n-1} \varepsilon_n + \varepsilon_n^2$. Hence, if $\cF_n=\sigma(X_0, \ldots, X_n)$,
we obtain that for any real $t$ and for all $n \geq 1$,
\begin{equation} 
\label{CIAR3}
\dE[\exp(tX_n^2)|\cF_{n-1}]
=\exp(t \theta^2 X_{n-1}^2)\Lambda_{n-1}(t)
\end{equation}
where
\begin{equation} 
\label{CIAR4}
\Lambda_{n-1}(t)= p \exp\bigl(4tq^2+4\theta t q X_{n-1}\bigr)+q \exp\bigl(4tp^2-4\theta t p X_{n-1}\bigr).
\end{equation}
It follows from Kearns-Saul's inequality \cite{BDR2015}, \cite{KS1998} that for any real $s$,
\begin{equation} 
\label{CIAR5}
p\exp(qs)+ q\exp(-ps) \leq  \exp\Bigl(\frac{\varphi(p)s^2}{4}\Bigr),
\end{equation}
where the function
$\varphi$ is defined by
$\varphi (p) = (q-p)/ \log(q/p)$.
One can observe that $\varphi (p) \in [0, 1/2]$. Then, we deduce from \eqref{CIAR4} and \eqref{CIAR5} 
with $s=4 \theta t X_{n-1}$ that for any
$t\leq 0$,
$
\Lambda_{n-1}(t) \leq \exp\bigl(4tp^2 + 4\varphi(p)t^2\theta^2 X_{n-1}^2 \bigr)
$
leading to
\begin{equation} 
\label{CIAR6}
\dE[\exp(tX_n^2)|\cF_{n-1}] \leq  \exp\bigl(4tp^2 +  t \theta^2 X_{n-1}^2(1+4\varphi(p)t) \bigr).
\end{equation}
As soon as $t \in[-1/2,0]$, we get from \eqref{CIAR6} that
$\dE[\exp(tX_n^2)|\cF_{n-1}] \leq \exp(4tp^2)$.
Consequently, for any $t\in [-1/2\sigma^2,0]$ and for all $n \geq 1$,
$$
\dE[\exp(t\cM_n)] \leq  \dE[\exp(t\cM_{n-1})] \exp(4tp^2 \sigma^2)
$$
which ensures that
\begin{equation} 
\label{CIAR8}
\dE[\exp(t\cM_n)] \leq \exp(4n tp^2\sigma^2)
\end{equation}
as $|X_0| \geq 2p$. Therefore, it follows from  \eqref{CIAR2} and  \eqref{CIAR8} that
for any $x \in [0, \sqrt{ad(a)}]$,
\begin{equation*}
\dP\bigl(|\widehat{\theta}_n-\theta| \geq x\bigr)
\leq 
2\exp\Bigl(-\frac{np^2x^2}{a d(a)}\Bigr)
\end{equation*}
which achieves the proof of Corollary \ref{C-AR}.
\end{proof}


\subsection{Internal diffusion-limited aggregation process}


Our second application deals with the internal diffusion-limited aggregation process.
This aggregation process, first introduced in Mathematics by Diaconis and Fulton \cite{DiaconisFulton1991}, is a cluster growth model in $\dZ^d$
where explorers, starting from the origin at time $0$, are travelling as a simple random walk on $\dZ^d$
until they reach an uninhabited site that is added to the cluster.
In the special case $d=1$, the cluster is an interval $A(n)=[L_n,R_n]$ which, properly normalized, converges
almost surely to $[-1,1]$.
In dimension $d \geq 2$, Lawler, Bramson and Griffeath \cite{LBG1992} have shown that the limit shape of the
cluster is a sphere, see some refinements in \cite{Lawler1995}. We shall restrict our attention
on the one-dimensional internal diffusion-limited aggregation process.
Consider the simple random walk on the integer number line $\dZ$ starting from the origin
at time $0$. At each step, the explorer moves to the right $+1$ or to the left $-1$ with equal probability $1/2$.
Let $(A(n))$ be the sequence of random subsets of $\dZ$, recursively defined as follows:
$A(0)=\{0\}$ and, for all $n\geq 0$,
\begin{equation*}
   A(n+1) = \left \{ \begin{array}{l}
    A(n)\cup \{L_{n}-1\} \vspace{0.5ex} \\
    A(n)\cup \{R_{n}+1\}
   \end{array} \nonumber \right.
\end{equation*}
if the explorer leaves $A(n)$ by the left side or by the right side, respectively, where
$L_n$ and $R_n$ stand for be the minimum and the maximum of $A(n)$. To be more precise,
$$
A(n)=\{L_n,L_n + 1,\ldots,R_n-1, R_n\}.
$$ 
The random set $A(n)$ is characterized by $X_n=L_n+R_n$ as $R_n-L_n=n$,
$$
L_n=\frac{X_n-n}{2} \hspace{1cm}\text{and}\hspace{1cm} R_n=\frac{X_n+n}{2}.
$$
One can observe that $L_n$ and $R_n$ correspond to the number of negative and positive sites of
$A(n)$, respectively. It was proven in \cite{DiaconisFulton1991} that
$$
\lim_{n\rightarrow \infty} \frac{X_n}{n}=0 \hspace{1cm}\text{a.s.}
$$
and
$$
\frac{X_n}{\sqrt{n}} \liml \cN\Bigl(0,\frac{1}{3} \Bigr),
$$
which are respectively equivalent to the almost sure convergences
$$
\lim_{n\rightarrow \infty} \frac{L_n}{n}=-\frac{1}{2} \hspace{1cm}\text{and} \hspace{1cm}
\lim_{n\rightarrow \infty} \frac{R_n}{n}=\frac{1}{2} 
$$
and the asymptotic normalities
$$
\frac{1}{\sqrt{n}}\Bigl(L_n + \frac{n}{2} \Bigr) \liml \cN\Bigl(0,\frac{1}{12} \Bigr)
\hspace{1cm}\text{and} \hspace{1cm}
\frac{1}{\sqrt{n}}\Bigl(R_n - \frac{n}{2} \Bigr) \liml \cN\Bigl(0,\frac{1}{12} \Bigr).
$$
It is possible to prove from Azuma-Hoeffding's inequality \cite{BDR2015} that for any positive $x$,
\begin{equation}
\label{IDLAAH}
\dP\Bigl( \frac{|X_n|}{n} \geq x \Bigr) \leq 2 \exp\Bigl(-\frac{3}{8} n x^2\Bigr).
\end{equation} 
Our goal is to improve this inequality with a suitable use of Theorems \ref{T-CIWA}
and  \ref{T-CISNMISS}.

\begin{cor} 
\label{C-IDLA}
For any $a$ in the interval $]1/8,9/16]$ and for any positive $x$,  we have
\begin{equation}
\label{INEGIDLA1}
\dP\Bigl( \frac{|X_n|}{n} \geq x \Bigr) \leq 2 \exp\Bigl(-\frac{nx^2}{2ac_n(a)} \Bigr)
\end{equation}
and
\begin{equation}
\label{INEGIDLA2}
\dP\Bigl(\frac{|X_n|}{\sqrt{n}}\geq x\Bigr)\leq  
(d_n(a))^{1/3} x^{-2/3} \exp\Bigl(-\frac{x^2}{3d_n(a)}\Bigr)
\end{equation}
where
\begin{equation}
\label{DEFCNA}
c_n(a)=\Bigl(\frac{2n+1}{n+1}\Bigr)\Bigl(\frac{3+c(a)}{6}\Bigr)+\Bigl(\frac{n(1+c(a)) + 2c(a)}{(n+1)^2}\Bigr)
\end{equation}
and
\begin{equation}
\label{DEFDNA}
d_n(a)=c_n(a)+\Bigl(\frac{n+2}{3n}\Bigr).
\end{equation}
\end{cor}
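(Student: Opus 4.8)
The plan is to realise $(X_n)$ as a suitably normalised martingale and then feed its quadratic variations into Theorems \ref{T-CIWA} and \ref{T-CISNMISS}. Writing $\xi_k=X_k-X_{k-1}\in\{-1,+1\}$, the gambler's ruin probabilities for the explorer started at $0$ in the interval $[L_{k-1}-1,R_{k-1}+1]$ of length $k+1$ give $\dE[\xi_k\,|\,\cF_{k-1}]=-X_{k-1}/(k+1)$, since $L_{k-1}+R_{k-1}=X_{k-1}$. Consequently $\dE[(k+1)X_k\,|\,\cF_{k-1}]=kX_{k-1}$, so that $M_n=(n+1)X_n$ is a locally square integrable martingale with $M_0=0$. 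As $|X_n|\geq nx$ is equivalent to $|M_n|\geq (n+1)nx$, inequality \eqref{INEGIDLA1} will follow from \eqref{CIWA1} once a deterministic upper bound for $S_n(a)$ is available.

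First I would compute the two quadratic variations. From $\Delta M_k=X_{k-1}+(k+1)\xi_k$ and $\xi_k^2=1$ one gets $\cM_n=\sum_{k=1}^n\bigl((k+1)^2-X_{k-1}^2\bigr)$. For the total variation I would use the identity $2X_{k-1}\xi_k=X_k^2-X_{k-1}^2-1$ together with an Abel summation of $\sum_{k=1}^n(k+1)(X_k^2-X_{k-1}^2)$; the telescoping collapses the random cross terms and yields the closed form $[M]_n=(n+1)X_n^2+\tfrac13 n(n+1)(n+2)$. Substituting these into $S_n(a)=[M]_n+c(a)\cM_n$ isolates the only genuinely random part, namely $(n+1)X_n^2-c(a)\sum_{j=1}^{n-1}X_j^2$, the remainder being the deterministic quantity $(1+c(a))\sum_{k=1}^n(k+1)^2-\sum_{k=1}^n(k+1)$.

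The heart of the argument, and the step I expect to be hardest, is the almost sure bound $S_n(a)\leq (n+1)^2 n\,c_n(a)$ with $c_n(a)$ given by \eqref{DEFCNA}. A short computation shows this is equivalent to $(n+1)X_n^2-c(a)\sum_{j=1}^{n-1}X_j^2\leq \tfrac16 n(n+1)(4n-1)+n^2$, whose right-hand side no longer depends on $c(a)$. Here the restriction $a\leq 9/16$ enters decisively: it is exactly the condition $c(a)\geq 1$, which lets me replace $c(a)$ by $1$ in the negative term and reduce to the case $c(a)=1$. The point is that one cannot bound $(n+1)X_n^2$ and $\sum_{j=1}^{n-1}X_j^2$ separately, since saturating $X_n^2=n^2$ forces the walk to be monotone and hence $\sum_{j=1}^{n-1}X_j^2$ to be large. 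I would quantify this with the Lipschitz bound $|X_j|\geq (|X_n|-(n-j))_+$ coming from $|\Delta X_j|=1$, which gives $\sum_{j=1}^{n-1}X_j^2\geq \tfrac16(m-1)m(2m-1)$ for $m=|X_n|$. It then remains to maximise the one-variable function $f(m)=(n+1)m^2-\tfrac16(m-1)m(2m-1)$ over $m\in\{0,1,\dots,n\}$; since $f$ is increasing on $[1,n]$, its maximum is $f(n)=\tfrac16(4n^3+9n^2-n)$, which is precisely the required right-hand side. Plugging $S_n(a)\leq (n+1)^2 n\,c_n(a)$ into \eqref{CIWA1} with the threshold $(n+1)nx$ delivers \eqref{INEGIDLA1}.

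For \eqref{INEGIDLA2} I would invoke \eqref{CISNMISS2}. Since $(M_n)$ is a martingale, $\dE[M_n^2]=\dE[\cM_n]=\dE[[M]_n]$, and equating the two closed forms $\dE[M_n^2]=(n+1)^2\dE[X_n^2]$ and $\dE[[M]_n]=(n+1)\dE[X_n^2]+\tfrac13 n(n+1)(n+2)$ gives $\dE[X_n^2]=(n+2)/3$, hence $\dE[M_n^2]=(n+1)^2(n+2)/3$. Using the almost sure bound of the previous step together with $a\leq 1$ yields $a S_n(a)+\dE[M_n^2]\leq (n+1)^2 n\,c_n(a)+(n+1)^2(n+2)/3=(n+1)^2 n\,d_n(a)$ with $d_n(a)$ as in \eqref{DEFDNA}. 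Finally, $|M_n|=(n+1)|X_n|$ turns the self-normalised ratio in \eqref{CISNMISS2} into a bound for $|X_n|/\sqrt{n\,d_n(a)}$, and choosing the threshold so that $x\sqrt{3/2}$ corresponds to $x/\sqrt{d_n(a)}$ rearranges the constants $(2/3)^{1/3}$ and the Gaussian exponent into exactly $(d_n(a))^{1/3}$ and $x^2/(3d_n(a))$, giving \eqref{INEGIDLA2}.
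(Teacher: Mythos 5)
Your proof is correct, and I verified its two nonstandard ingredients: the exact identity $[M]_n=(n+1)X_n^2+\tfrac13 n(n+1)(n+2)$ (the Abel summation does collapse the cross terms, using $X_0=0$), and the reduction of the bound $S_n(a)\leq n(n+1)^2c_n(a)$ to $(n+1)X_n^2-c(a)\sum_{j=1}^{n-1}X_j^2\leq \tfrac16(4n^3+9n^2-n)$, whose right-hand side is indeed $c(a)$-free and equals $f(n)$ for your $f(m)=(n+1)m^2-\tfrac16(m-1)m(2m-1)$. While your overall skeleton is the same as the paper's (same martingale $M_n=(n+1)X_n$, same deterministic bound $S_n(a)\leq n(n+1)^2c_n(a)$ fed into \eqref{CIWA1}, same passage from \eqref{CISNMISS2} to \eqref{INEGIDLA2} via $aS_n(a)+\dE[M_n^2]\leq n(n+1)^2d_n(a)$), the key technical step is handled by a genuinely different argument. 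The paper never computes $[M]_n$ exactly: it bounds each increment by $(k\xi_k+X_k)^2\leq 3k^2+X_k^2$ using $|X_k|\leq k$, so the resulting $+\sum_{k=1}^{n}X_k^2$ is absorbed by the $-c(a)\sum_{k=1}^{n-1}X_k^2$ coming from $c(a)\!\cM_n$ once $c(a)\geq 1$; only the single term $X_n^2\leq n^2$ survives, and the bound follows with no further work. Your exact identity makes the problem superficially harder—as you rightly note, discarding the negative sum and using $X_n^2\leq n^2$ gives $(n+1)n^2\geq \tfrac16(4n^3+9n^2-n)$, so it no longer suffices—and you close the gap with the path-constraint bound $|X_j|\geq (|X_n|-(n-j))_+$ and a one-variable maximization, landing on exactly the same constant $c_n(a)$. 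What each buys: the paper's route is shorter, its crude increment bound being engineered precisely so that the random sums cancel; your route is more transparent about where the slack enters, the closed form for $[M]_n$ is of independent interest, and it yields the slick derivation $\dE[X_n^2]=(n+2)/3$ from $\dE[M_n^2]=\dE\bigl[[M]_n\bigr]$, whereas the paper iterates the recursion $\dE[X_n^2]=1+\tfrac{n-1}{n+1}\dE[X_{n-1}^2]$.
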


\begin{rem}
The calculation of $c_n(a)$ and $d_n(a)$ looks rather complicated. However, it is absolutely not the case.  As a matter of fact, if $a=1/3$,
$c(a)=2$ and it immediately follows from \eqref{DEFCNA} that
$$
c_n(a)= \frac{10n^2+33n+29}{6(n+1)^2}.
$$
It is not hard to see that for all $n \geq 1$, $c_n(a) \leq 3$ and $d_n(a) \leq 4$.
Consequently, we can deduce from \eqref{INEGIDLA1} that for any positive $x$, 
$$
\dP\Bigl( \frac{|X_n|}{n} \geq x \Bigr) \leq 2 \exp\Bigl(-\frac{nx^2}{2} \Bigr)
$$
which clearly outperforms inequality \eqref{IDLAAH}. In addition, \eqref{INEGIDLA2} implies 
that for any positive $x$, 
$$
\dP\Bigl(\frac{|X_n|}{\sqrt{n}}\geq x\Bigr)\leq  
\Bigl(\frac{2}{x} \Bigr)^{2/3} \exp\Bigl(-\frac{x^2}{12}\Bigr).
$$
Moreover, if $a=25/96$,
$c(a)=3$ and we obtain from \eqref{DEFCNA} that
$$
c_n(a)= \frac{2n^2+5n+7}{(n+1)^2}.
$$
Hence, for all $n \geq 1$, $c_n(a) \leq 7/2$ and $d_n(a) \leq 9/2$. Therefore, we find from \eqref{INEGIDLA1} that for any positive $x$, 
$$
\dP\Bigl( \frac{|X_n|}{n} \geq x \Bigr) \leq 2 \exp\Bigl(-\frac{96 nx^2}{175} \Bigr)
$$
which improves the above inequality for $a=1/3$. Finally, we also deduce from \eqref{INEGIDLA2} that for any positive $x$, 
$$
\dP\Bigl(\frac{|X_n|}{\sqrt{n}}\geq x\Bigr)\leq  
\Bigl(\frac{3}{\sqrt{2}x} \Bigr)^{2/3} \exp\Bigl(-\frac{2x^2}{27}\Bigr).
$$
\end{rem}

\begin{proof}
It follows from a  stopping time argument for gambler's ruin \cite{DiaconisFulton1991}, \cite{Freedman1965}
that for all $n \geq 1$,
\begin{eqnarray}
\label{GR}
\dP(X_{n}=X_{n-1}-1|X_{n-1}) &=& \frac{(n+1+X_{n-1})}{2(n+1)},  \notag \\
\dP(X_{n}=X_{n-1}+1|X_{n-1}) &=&\frac{(n+1-X_{n-1})}{2(n+1)}.
\end{eqnarray} 
We obtain from \eqref{GR} that for all $n \geq 1$, 
$X_n=X_{n-1} + \xi_n$ where
the distribution of the increment $\xi_n$ given $\cF_{n-1}$ is a Rademacher $\cR(p_n)$ distribution with
$$
p_n=\frac{(n+1-X_{n-1})}{2(n+1)}.
$$
Hence, we clearly have
\begin{equation}
\label{CMEAN1X}
\dE[X_n | \cF_{n-1} ]= X_{n-1} + \dE[\xi_n | \cF_{n-1} ] = \Bigl(\frac{n}{n+1}\Bigr) X_{n-1}
\end{equation}
and
\begin{equation}
\label{CMEAN2X}
\dE[X_n^2 | \cF_{n-1} ]= X_{n-1}^2 + 2 X_{n-1}\dE[\xi_n | \cF_{n-1} ] +1= 1+\Bigl(\frac{n-1}{n+1}\Bigr) X_{n-1}^2.
\end{equation}
Let $(M_n)$ be the sequence defined by $M_n=(n+1)X_n$. We immediately deduce from
\eqref{CMEAN1X} and \eqref{CMEAN2X} that $(M_n)$ is a locally square integrable real martingale
such that
$$
\cM_n=\sum_{k=1}^n (k+1)^2- \sum_{k=1}^n X_{k-1}^2.
$$
Moreover, for all $n \geq 1$, $|X_n| \leq n$. Hence,
$$
[M]_n = \sum_{k=1}^n((k+1)X_k -kX_{k-1})^2= \sum_{k=1}^n( k\xi_k +X_{k})^2 \leq 3 \sum_{k=1}^n k^2 + \sum_{k=1}^n X_{k}^2.
$$
One can observe that we always have $\cM_n\neq [M]_n$.
In addition,
\begin{equation}
\label{INEGSNA1}
S_n(a)  \leq  (3+c(a)) \sum_{k=1}^n k^2 +(1-c(a)) \sum_{k=1}^{n-1} X_{k}^2 +X_n^2 +c(a)n(n+2).
\end{equation}
We already saw that for any $a \in ]1/8,9/16]$, $c(a) \geq 1$. Therefore, we obtain from
\eqref{INEGSNA1} that for any $a \in ]1/8,9/16]$,
\begin{equation}
\label{MAJSNA}
S_n(a)  \leq  (3+c(a)) \sum_{k=1}^n k^2 + n(n+c(a)(n+2)) \leq n(n+1)^2 c_n(a)
\end{equation}
where $c_n(a)$ is given by \eqref{DEFCNA}. Hence, it follows from  \eqref{CIWA1} with 
$y\!=\! n(n+1)^2 c_n(a)$ that for any $a \!\in ]1/8,9/16]$ and for any positive $x$,
\begin{eqnarray*}
\dP \Bigl( \frac{|X_n|}{n} \geq x \Bigr) &=& \dP\bigl( |M_n|\geq x n(n+1)\bigr), \\
&=& \dP\bigl( |M_n|\geq x n(n+1), S_n(a) \leq y \bigr), \\
& \leq & 2 \exp\Bigl(-\frac{nx^2}{2ac_n(a)} \Bigr),
\end{eqnarray*}
which is exactly what we wanted to prove. Furthermore, we can deduce from identity 
\eqref{CMEAN2X} that for all $n \geq 1$,
\begin{equation*}
\dE[X_n^2]= 1+\Bigl(\frac{n-1}{n+1}\Bigr) \dE[X_{n-1}^2]
\end{equation*}
where the initial value $\dE[X_1^2]=1$. It implies that for all $n \geq 1$,
\begin{equation}
\label{MEANX}
\dE[X_n^2]= \frac{(n+2)}{3}.
\end{equation}
Consequently, we immediately obtain from \eqref{MEANX} that for all $n \geq 1$,
\begin{equation}
\label{MEANM}
\dE[M_n^2] =\frac{(n+1)^2(n+2)}{3}.
\end{equation}
Finally, we find from \eqref{CISNMISS2} together with \eqref{DEFDNA}, \eqref{MAJSNA} and \eqref{MEANM} that for
any $a \in ]1/8,9/16]$ and for any positive $x$,
\begin{equation*}
\dP\Bigl(\frac{|X_n|}{\sqrt{n d_n(a)}}\geq x\sqrt{\frac{3}{2}}\Bigr)\leq  
\Bigl(\frac{2}{3} \Bigr)^{1/3} x^{-2/3} \exp\Bigl(-\frac{x^2}{2}\Bigr)
\end{equation*}
which clearly leads to \eqref{INEGIDLA2}, completing the proof of Corollary \ref{C-IDLA}.
\end{proof}


\subsection{Online statistical learning}


Our third application is devoted to the study of the statistical risk of hypothesis during an online 
learning process using concentration inequalities for martingales.
We refer the reader to the survey of Cesa-Bianchi and Lugosi \cite{CBLugosi2006} for a rather
exhaustive description of the the underlying theory concerning online learning, as well as
to the recent book of Hazan \cite{Hazan2016} for online convex optimization. 
Our approach is based on the contributions of Cesa-Bianchi {\it et al.} \cite{CBCG2004}, \cite{CBG2008}
dealing with the statistical risk of hypothesis in the situation where 
the ensemble of hypothesis is produced by training a learning algorithm incrementally on a
data set of independent and identically distributed random variables. Their bounds 
rely on Freedman concentration inequality for martingales \cite{Freedman1975}.
Consider the task of predicting a sequence in an online manner with inputs and outputs taking values 
in some abstract measurable spaces $\cX$ and $\cY$, respectively.
We call hypothesis $H$, the classifier or regressor generated by a learning algorithm
after training.  The predictive performance of hypothesis $H$
is evaluated by the theoritical risk denoted $R(H)$, which is the expected loss on a realisation 
$(X,Y) \in \cX \times \cY$ drawn from the underlying distribution
\begin{equation}
\label{TRisk}
R(H)=\dE[\ell(H(X),Y)]
\end{equation}
where $\ell$ is a nonnegative and bounded loss function. For the sake of simplicity, we assume
that $\ell$ is bounded by $1$.
Denote by $\cS_n=\{(X_1,Y_1),\ldots,(X_n,Y_n)\}$ a training data set
of independent random variables sharing the same unknown distribution as $(X,Y)$.
Our goal is to predict $Y_{n+1} \in \cY$ given $X_{n+1} \in \cX$, on the basis of $\cS_n$.
Let $\cH_n=\{H_0,H_1,\ldots,H_{n-1} \}$ be a finite ensemble of hypothesis 
generated by an online learning algorithm where the initial hypothesis $H_0$ is
arbitrarily chosen.The empirical risk associated with the ensemble of hypothesis
$\cH_n$ and the training data set $\cS_n$ is given by
\begin{equation}
\label{ERisk}
\widehat{R}_n=\frac{1}{n}\sum_{k=1}^{n}\ell(H_{k-1}(X_k),Y_k).
\end{equation}
Denote by $R_n$ the average risk associated with the ensemble of hypothesis $\cH_n$,
\begin{equation}
\label{AriskH}
R_n=\frac{1}{n}\sum_{k=1}^{n} R(H_{k-1}).
\end{equation}
Our bound on the average risk $R_n$ is as follows.

\begin{cor}
\label{C-OSL}
Let $\cH_{n}=\{H_0,H_1,...,H_{n-1}\}$ be a finite ensemble of hypothesis
generated by a learning algorithm. Then, for any $a$ in the interval $]1/8,9/16]$
and for any positive $x$, we have
\begin{equation}
\label{OSLR1}
\dP( R_n \geq \widehat{R}_n +x ) \leq 
\exp\Bigl(-\frac{nx^2}{2a(1+c(a)V_n)}\Bigr),
\end{equation}
where
\begin{equation}
\label{VER}
V_n=\frac{1}{n}\sum_{k=1}^{n}\dE[\ell^2(H_{k-1}(X),Y)].
\end{equation}
In other words, for any $0< \delta \leqslant 1$,
\begin{equation}
\label{OSLR2}
\dP\Bigl( R_n \geq \widehat{R}_n + \sqrt{\frac{d_n(a)}{n}}\Bigr) \leq \delta,
\end{equation}
where
$
d_n(a)=-2a(1+c(a)V_n)\log (\delta).
$
Moreover, denote $m(a)=\max(4(1+c(a)),c^2(a))/2$. Then, 
for any $0< \delta \leqslant 1$ and
for all $n \geq -am(a) \log (\delta)$, we also have
\begin{equation}
\label{OSLR3}
\dP\Bigl( R_n \geq  \widehat{R}_n -ac(a) \frac{\log(\delta)}{n}+ \frac{1}{2} \sqrt{\Delta_n(a)} \Bigr)
\leq \delta
\end{equation}
where $\Delta_n(a)=B_n(a)\bigl(4+4c(a)\widehat{R}_n +c^2(a)B_n(a)\bigr)$ with
$B_n(a)= -2a\log (\delta)/n$. 
\end{cor}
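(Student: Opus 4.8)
The plan is to write the centred risk difference as a normalized martingale and feed it into the inequalities of Section \ref{S-MR}. Set $\cF_k=\sigma(\cS_k)$ and
$$M_n=\sum_{k=1}^{n}\bigl(R(H_{k-1})-\ell(H_{k-1}(X_k),Y_k)\bigr).$$
Since $H_{k-1}$ is $\cF_{k-1}$-measurable while $(X_k,Y_k)$ is independent of $\cF_{k-1}$ and shares the law of $(X,Y)$, we have $\dE[\ell(H_{k-1}(X_k),Y_k)\mid\cF_{k-1}]=R(H_{k-1})$, so $(M_n)$ is a locally square integrable martingale with $R_n-\widehat{R}_n=M_n/n$ and $\dP(R_n\geq\widehat{R}_n+x)=\dP(M_n\geq nx)$. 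Three elementary facts will drive the whole proof. As $0\leq\ell\leq 1$, the increments obey $|\Delta M_k|\leq 1$, hence $[M]_n\leq n$ almost surely. Next, $\cM_n=\sum_{k=1}^n\bigl(\dE[\ell^2(H_{k-1}(X_k),Y_k)\mid\cF_{k-1}]-R(H_{k-1})^2\bigr)$ yields $\dE[\cM_n]\leq nV_n$. Finally, since $\ell^2\leq\ell$ gives $\dE[\ell^2(H_{k-1}(X_k),Y_k)\mid\cF_{k-1}]\leq R(H_{k-1})$, we obtain the almost sure bound $\cM_n\leq\sum_{k=1}^n R(H_{k-1})=nR_n$.

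For \eqref{OSLR1} I would use the one-sided analogue of Theorem \ref{T-CIWA}, that is, the supermartingale property $\dE[\exp(\lambda M_n-\tfrac{a\lambda^2}{2}S_n(a))]\leq 1$ underlying \eqref{CIWA1}, which gives $\dP(M_n\geq nx,\,S_n(a)\leq y)\leq\exp(-(nx)^2/(2ay))$ without the factor $2$. Writing $S_n(a)=[M]_n+c(a)\cM_n$, I would discard the total quadratic variation through $[M]_n\leq n$ and control the predictable part through its mean, so that the effective variance proxy is $y=n(1+c(a)V_n)$; the exponent then collapses exactly to $-nx^2/(2a(1+c(a)V_n))$, which is \eqref{OSLR1}. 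The delicate point, and the one I expect to be the real obstacle, is precisely this passage from the random $\cM_n$ to the deterministic $nV_n$: since $\dE[\cM_n]\leq nV_n$ holds only in mean, one must either restrict to the event $\{\cM_n\leq nV_n\}$ inside \eqref{CIWA1} or combine the Chernoff step with the bound $\dE[\cM_n]\leq nV_n$ directly in the Laplace transform, the latter being the clean way to keep the statement free of an auxiliary event.

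Inequality \eqref{OSLR2} is then a one-line inversion of \eqref{OSLR1}: setting the right-hand side equal to $\delta$ and solving $nx^2=-2a(1+c(a)V_n)\log(\delta)=d_n(a)$ gives $x=\sqrt{d_n(a)/n}$. For \eqref{OSLR3} I would instead keep $S_n(a)$ and use the almost sure self-referential bound $S_n(a)\leq n(1+c(a)R_n)$ coming from $[M]_n\leq n$ and $\cM_n\leq nR_n$. Substituting $R_n=\widehat{R}_n+M_n/n$ expresses the variance proxy in terms of the observable $\widehat{R}_n$ and of the deviation itself; running the Chernoff bound and requiring its value to be at most $\delta$ then leads to the quadratic $t^2=B_n(a)\bigl(1+c(a)(\widehat{R}_n+t)\bigr)$ for the threshold $t=R_n-\widehat{R}_n$, whose positive root is $t=-ac(a)\log(\delta)/n+\tfrac12\sqrt{\Delta_n(a)}$ with $\Delta_n(a)=B_n(a)\bigl(4+4c(a)\widehat{R}_n+c^2(a)B_n(a)\bigr)$.

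The condition $n\geq -am(a)\log(\delta)$, equivalently $B_n(a)\leq 2/m(a)$, is exactly what guarantees that the Chernoff-optimal $\lambda$ remains in the admissible range of the supermartingale and that the discriminant $\Delta_n(a)$ produces a legitimate positive root. The main labour in \eqref{OSLR3} is the bookkeeping of this quadratic, whereas the only genuinely subtle issue, shared with \eqref{OSLR1}, is the treatment of the predictable quadratic variation $\cM_n$.
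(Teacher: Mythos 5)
Your skeleton (the martingale \eqref{MartRisk}, the one-sided form of Theorem \ref{T-CIWA} without the factor $2$, and the final quadratic algebra) matches the paper's proof, but the step you yourself flag as delicate for \eqref{OSLR1} is a genuine gap, and neither of the two repairs you propose works. In the paper, $V_n$ in \eqref{VER} is built from the very same terms $\dE[\ell^2(H_{k-1}(X),Y)]$ that appear in $\cM_n$: they are conditional expectations over a fresh pair $(X,Y)$ given the training history, hence $\cF_{k-1}$-measurable random variables. With this reading, $\cM_n \leq nV_n$ holds \emph{almost surely} --- one simply drops the nonnegative terms $R^2(H_{k-1})$ --- and combined with a bound on $[M]_n$ (your $[M]_n\leq n$ is correct and even avoids the paper's use of $c(a)\geq 1$) this gives the pointwise inequality $S_n(a)\leq n(1+c(a)V_n)$, which is exactly what the paper feeds into \eqref{CIWA1}. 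No passage from $\cM_n$ to its mean is ever needed. By contrast, under your reading of $V_n$ as an unconditional expectation, your option (a) (restricting to the event $\{\cM_n\leq nV_n\}$) only bounds $\dP(M_n\geq nx,\,\cM_n\leq nV_n)$ and leaves $\dP(\cM_n>nV_n)$ uncontrolled, so \eqref{OSLR1} does not follow; and your option (b), the ``clean way'', is mathematically false: in Lemma \ref{L-SUPERMG} the predictable variation enters as $\exp(-b(a)t^2\cM_n/2)$, and Jensen's inequality gives $\dE[\exp(-cZ)]\geq \exp(-c\,\dE[Z])$, i.e.\ the wrong direction, so the in-mean bound $\dE[\cM_n]\leq nV_n$ cannot be substituted inside the Laplace transform (and $M_n$ and $\cM_n$ are strongly dependent, so no decoupling argument is available either).

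For \eqref{OSLR3}, your quadratic $t^2=B_n(a)\bigl(1+c(a)(\widehat{R}_n+t)\bigr)$ and its positive root $t=-ac(a)\log(\delta)/n+\frac{1}{2}\sqrt{\Delta_n(a)}$ are correct and coincide with the paper's $\Phi_a^{-1}(\widehat{R}_n)$; squaring the inequality $R_n-\widehat{R}_n\geq\sqrt{B_n(a)(1+c(a)R_n)}$ and solving for the root is a legitimate (indeed slightly cleaner) substitute for the paper's inversion of $\Phi_a(x)=x-\sqrt{B_n(a)(1+c(a)x)}$. But the self-referential inequality \eqref{CRisk3} must be obtained the way the paper obtains it --- by event inclusion from \eqref{OSLR2}, using the almost sure bound $V_n\leq R_n$ coming from $\ell^2\leq\ell$ --- not by ``running the Chernoff bound'' with the random proxy $n(1+c(a)R_n)$, which is the same random-$y$ objection as above. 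Finally, your explanation of the hypothesis $n\geq -am(a)\log(\delta)$ is wrong on both counts: Lemma \ref{L-SUPERMG} holds for every real $t$, so there is no admissible range constraining the Chernoff parameter, and $\Delta_n(a)=B_n(a)\bigl(4+4c(a)\widehat{R}_n+c^2(a)B_n(a)\bigr)$ is nonnegative for all $n$ since each of its terms is nonnegative. In the paper this hypothesis serves one purpose only: it makes $\Phi_a$ increasing, hence invertible, on $[0,1]$, which is what licenses rewriting $\{\Phi_a(R_n)\geq\widehat{R}_n\}$ as $\{R_n\geq\Phi_a^{-1}(\widehat{R}_n)\}$; your squaring route would not need it at all.
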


\begin{rem}
On the one hand, \eqref{OSLR2} improves the deviation inequality
given in Proposition 1 of Cesa-Bianchi, Conconi and Gentile \cite{CBCG2004}, 
$$
\dP\Bigl( R_n \geq \widehat{R}_n + \sqrt{\frac{-2\log(\delta)}{n}}\Bigr) \leq \delta,
$$
as $V_n$ is always smaller than $1$. 
On the other hand, \eqref{OSLR3} is drastically more accurate than the deviation inequality
given in Proposition 2 of Cesa-Bianchi and Gentile \cite{CBG2008},
\begin{equation}
\label{CBG}
\dP\Bigl( R_n \geq \widehat{R}_n + \frac{36}{n}\log\Bigl( \frac{n  \widehat{R}_n +3}{\delta}\Bigr)
+ 2\sqrt{\frac{\widehat{R}_n}{n}\log\Bigl( \frac{n  \widehat{R}_n +3}{\delta}\Bigr)}\Bigr) \leq \delta.
\end{equation} 
Indeed, one can observe that the right-hand sides of \eqref{OSLR3} and \eqref{CBG} are increasing functions
of $\widehat{R}_n$.  The smallest value in \eqref{CBG} for $\widehat{R}_n=0$ is given by
$36\log(3/\delta)/n$. Consequently, inequality  \eqref{CBG} is only effective for $n \geq 36\log(3/\delta)$, which implies
that $n$ must always be greater than $40$. For example, if $\delta=1/5$, it is necessary to assume that $n \geq 36\log(15)$, that is $n\geq 98$.
If $a=1/3$, then $c(a)=2$ and $m(a)=6$. Consequently, inequality \eqref{OSLR3} is interesting as soon as $n \geq -2 \log(\delta)$.
For example, if $\delta=1/5$, it is necessary to assume that $n\geq 4$. For instance, if $\delta=1/5$, $n=100$ and $a=1/3$, the smallest values
in \eqref{OSLR3} and \eqref{CBG} are respectively given by $0.220$ and $0.975$. Finally, for all values of $\delta$, $n$ and $a$, 
one can easily check that \eqref{OSLR3} is always sharper than \eqref{CBG}.
\end{rem}
 
\begin{proof}
Let $(M_n)$ be the locally square integrable real martingale given by
\begin{equation}
\label{MartRisk}
M_n=\sum_{k=1}^{n}\bigl(R(H_{k-1})-\ell(H_{k-1}(X_{k}),Y_{k})\bigr),
\end{equation}
where we recall that $R(H)=\dE[\ell(H(X),Y)]$. We clearly have
$$
\cM_n = \sum_{k=1}^{n}\bigl(\dE[\ell^2(H_{k-1}(X),Y))]-R^2(H_{k-1})\bigr),
$$
$$
[M]_{n} = \sum_{k=1}^{n}\bigl( R(H_{k-1})-\ell(H_{k-1}(X_{k}),Y_{k})\bigr)^2.
$$
Consequently, for any $a \in ]1/8,9/16]$, 
$$
S_n(a) \leq (1-c(a)) \sum_{k=1}^{n}R^2(H_{k-1}) 
+ \sum_{k=1}^n  \ell^2(H_{k-1}(X_{k}),Y_{k}) + c(a)  \sum_{k=1}^n \dE[\ell^2(H_{k-1}(X),Y)]
$$
Hence, as $c(a) \geq 1$ and the loss function $\ell$ is bounded by $1$, we obtain from \eqref{VER} that
$S_n(a) \leq n(1+c(a)V_n)$. Therefore,
it follows from \eqref{CIWA1} with 
$y\!=\! n(1+c(a)V_n)$ that for any $a \in ]1/8,9/16]$ and for any positive $x$,
\begin{equation}
\label{CRisk1}
\dP(M_n \geq x) \leq \exp\Bigl(-\frac{x^2}{2na(1+c(a)V_n)}\Bigr)
\end{equation}
which immediately leads to
\begin{equation}
\label{CRisk2}
\dP\Bigl(\frac{M_n}{n} \geq x\Bigr) \leq \exp\Bigl(-\frac{n x^2}{2a(1+c(a)V_n)}\Bigr).
    \end{equation}
However, we clearly have from \eqref{MartRisk} that
$$
\frac{1}{n}M_n = R_n - \widehat{R}_n.
$$
Hence, \eqref{CRisk2} immediately  implies \eqref{OSLR1} and \eqref{OSLR2}. It only remains to prove \eqref{OSLR3}.
Since the loss function $\ell$ is bounded by $1$, we obtain from \eqref{VER} that $V_n \leq R_n$. Consequently, 
if $B_n(a)= -2a\log (\delta)/n$, 
\eqref{OSLR2}
ensures that for any $0< \delta \leqslant 1$,
\begin{equation}
\label{CRisk3}
\dP\Bigl( R_n \geq \widehat{R}_n + \sqrt{B_n(a)(1+c(a)R_n)}\Bigr) \leq \delta.
\end{equation}
Therefore, we deduce from \eqref{CRisk3} that for any $0< \delta \leqslant 1$,
\begin{equation}
\label{CRisk4}
\dP\bigl( \Phi_a(R_n)  \geq  \widehat{R}_n  \bigr) \leq \delta
\end{equation}
where the function $\Phi_a$ is defined, for all $x$ in $[0,1]$, by
$\Phi_a(x)=x-\sqrt{B_n(a)(1+c(a)x)}$. It is not hard to see that, as soon as $n \geq -am(a) \log (\delta)$ with $m(a)=\max(4(1+c(a)),c^2(a))/2$,
$\Phi_a$ is a strictly convex and increasing function on $[0,1]$. Then, $\Phi_a$ is invertible and it follows from straightforward calculations that
$$
\Phi_a^{-1}(x)=x  + \frac{1}{2} c(a) B_n(a)+ \frac{1}{2} \sqrt{B_n(a)\bigl(4+4c(a)x +c^2(a)B_n(a)\bigr)}.
$$
Finally, we immediately obtain from \eqref{CRisk4} that
\begin{equation}
\label{CRisk5}
\dP\bigl( \Phi_a(R_n)  \geq  \widehat{R}_n  \bigr)=\dP\bigl( R_n  \geq   \Phi_a^{-1}(\widehat{R}_n)  \bigr) \leq \delta
\end{equation}
which is exactly inequality \eqref{OSLR3}, completing the proof of Corollary \ref{C-OSL}.
\end{proof} 

\section*{Appendix A \\ Two keystone lemmas}
\renewcommand{\thesection}{\Alph{section}}
\renewcommand{\theequation}{\thesection.\arabic{equation}}
\setcounter{section}{1}
\setcounter{subsection}{0}
\setcounter{equation}{0}

Our first lemma deals with a sharp upper bound on the Hermite generating function associated with
a centered random variable $X$.
\noindent
\begin{lem} 
\label{L-HERMITE}
Let $X$ be a square integrable random variable with mean zero and variance
$\sigma^2$. For all $t\in \dR$, denote
\begin{equation}
\label{DEFL}
L(t)=\dE\Bigl[\exp\Bigl(tX-\frac{at^2}{2}X^2\Bigr)\Bigr]
\end{equation}
with $a>1/8$.
Then, for all $t\in \dR$,
\begin{equation}
\label{MAJLD}
L(t)\leq 1+\frac{b(a)t^2}{2}\sigma^2,
\end{equation}
where
\begin{equation}
\label{DEFBA}
b(a) = \frac{2a(1 - 2a +2\sqrt{a(a+1)})}{8a-1}.
\end{equation}
\end{lem}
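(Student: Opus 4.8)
The plan is to deduce \eqref{MAJLD} from a sharp deterministic inequality applied pointwise to $u=tX$. Concretely, I would first establish that, for every real $u$ and every $a>1/8$,
\begin{equation}
\label{POINTW}
\exp\Bigl(u-\frac{a}{2}u^2\Bigr)\leq 1+u+\frac{b(a)}{2}u^2.
\end{equation}
Granting \eqref{POINTW}, substitute $u=tX$ (so that $u^2=t^2X^2$), take expectations, and use $\dE[X]=0$ together with $\dE[X^2]=\sigma^2$ to obtain $L(t)\leq 1+\tfrac{b(a)}{2}t^2\sigma^2$ immediately. Passing to expectations is legitimate because $tx-\tfrac{a}{2}t^2x^2$ is a downward parabola in $x$ with maximum $\tfrac{1}{2a}$, so the integrand defining $L(t)$ is bounded by $\exp(1/2a)<\infty$.

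The heart of the matter is \eqref{POINTW}. I would prove it through the auxiliary function
\[
\psi(u)=\Bigl(1+u+\frac{b(a)}{2}u^2\Bigr)\exp\Bigl(-u+\frac{a}{2}u^2\Bigr),
\]
showing that $\psi(u)\geq 1=\psi(0)$ for all $u$; multiplying this by the positive factor $\exp(u-\tfrac a2 u^2)$ returns \eqref{POINTW} at once. A direct differentiation yields the factorisation
\[
\psi'(u)=u\,Q(u)\exp\Bigl(-u+\frac{a}{2}u^2\Bigr),\qquad
Q(u)=\frac{ab(a)}{2}\,u^2+\Bigl(a-\frac{b(a)}{2}\Bigr)u+\bigl(a+b(a)-1\bigr),
\]
so that the sign of $\psi'$ is governed entirely by the product $u\,Q(u)$.

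The key step, and the one on which everything hinges, is to recognise that $b(a)$ is precisely the value for which $Q$ is a nonnegative quadratic. Its leading coefficient $ab(a)/2$ is positive, and computing its discriminant $\bigl(a-\tfrac b2\bigr)^2-2ab\bigl(a+b-1\bigr)$ shows that it vanishes exactly when $(8a-1)b^2+4a(2a-1)b-4a^2=0$. Solving this quadratic in $b$ produces the inner discriminant $64a^3(a+1)$, whose square root $8a\sqrt{a(a+1)}$ is the source of the radical in \eqref{DEFBA}; retaining the positive root returns exactly $b=b(a)$. Hence, for $b=b(a)$, the discriminant of $Q$ is zero and $Q(u)\geq 0$ for every $u$. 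I expect this identification to be the main obstacle: it is not computationally hard, but the closed form \eqref{DEFBA} only reveals its role once one sees it as the threshold making $Q$ nonnegative.

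With $Q\geq 0$ the conclusion is immediate. Since $\exp(-u+\tfrac a2 u^2)>0$, the derivative $\psi'(u)$ has the sign of $u$; it may vanish at $u=0$ and at the (double) root of $Q$, but it never changes sign there. Therefore $\psi$ is nonincreasing on $(-\infty,0)$ and nondecreasing on $(0,\infty)$, so it attains its global minimum at $u=0$, where $\psi(0)=1$. This proves $\psi\geq 1$, hence \eqref{POINTW}, and taking expectations as above gives \eqref{MAJLD}. The degenerate case $a=1/3$, in which $b(a)=1-a$ and the double root of $Q$ sits at the origin, is covered verbatim by the same monotonicity argument.
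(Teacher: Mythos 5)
Your proposal is correct and takes essentially the same route as the paper: the paper proves the identical pointwise Hermite inequality by showing that $\varphi_a(x)=\log\bigl(1+x+\tfrac{b}{2}x^2\bigr)-x+\tfrac{a}{2}x^2\geq 0$, whose derivative factors as $x\,P_{a,b}(x)\bigl(1+x+\tfrac{b}{2}x^2\bigr)^{-1}$ with exactly your quadratic $Q=P_{a,b}$, and it identifies $b(a)$ as the positive root of the same vanishing-discriminant equation before taking expectations. Your only deviation is working with $\psi=e^{\varphi_a}$ instead of $\varphi_a$ itself, a cosmetic change (it does spare you the paper's side condition that $1+x+\tfrac{b}{2}x^2$ stay positive, i.e.\ $b>1/2$, needed there to define the logarithm).
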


\begin{proof}
The proof of Lemma \ref{L-HERMITE} relies on the following Hermite inequality, see also Proposition 12 in \cite{Delyon2009}
for the special value $a=1/3$. For all $x\in \dR$, we have
\begin{equation}
\label{HERMITE}
\exp\Bigl(x-\frac{ax^2}{2}\Bigr)\leq 1 + x + \frac{b(a)x^2}{2}.
\end{equation}
As a matter of fact, let
\begin{equation}
\label{VARPHI}
\varphi_a(x)= \log \Bigl( 1 + x + \frac{bx^2}{2} \Bigr) - x + \frac{ax^2}{2},
\end{equation}
where $b=b(a)$. It is of course necessary to assume that $b>1/2$ which ensures that $1+x+bx^2/2$ is positive
whatever the value of $x$ is. We clearly have
\begin{equation}
\label{DVARPHI}
\varphi_a^{\prime}(x) = \Bigl( 1 + x + \frac{bx^2}{2} \Bigr)^{-1}xP_{a,b}(x),
\end{equation}
where the second degree polynomial $P_{a,b}$ is given by
$$
P_{a,b}(x)=   \frac{abx^2}{2} +  \frac{(2a-b)x}{2} + a+b-1.
$$
Hereafter, we assume that $a>1/8$ and $b\neq 1-a$. The only positive root of the discriminant of $P_{a,b}$ is given by $b=b(a)$. As soon as $b\geq b(a)$, 
we have for all $x \in \dR$, $P_{a,b}(x) \geq 0$. Consequently, we deduce from \eqref{DVARPHI} that the function $\varphi_a$
reaches its minimum for $x=0$. Since $\varphi_a^{\prime}(0)=0$ and
$\varphi_a(0)=0$, we find that for all $x \in \dR$, $\varphi_a(x) \geq 0$ which immediately leads to \eqref{HERMITE}.
Therefore, we obtain from
\eqref{HERMITE} that for all $t \in \dR$,
$$
L(t)=\dE\Bigl[\exp\Bigl(tX-\frac{at^2}{2}X^2\Bigr)\Bigr] \leq \dE\Bigl[ 1 + tX + \frac{b(a)t^2X^2}{2} \Bigr]
= 1 + \frac{b(a) t^2}{2}\sigma^2
$$
which is exactly what we wanted to prove.
\end{proof}
\noindent
Our second exponential supermartingale lemma is as follows.
\begin{lem} 
\label{L-SUPERMG}
Let  $(M_n)$ be a locally square integrable real martingale. For all $t\in \dR$ and $n \geq 0$, denote
\begin{equation}
\label{DEFVN}
V_n(t)=\exp\Bigl(tM_n-\frac{at^2}{2}[M]_n - \frac{b(a)t^2}{2} \!\cM_n\Bigr)
\end{equation}
with $a>1/8$.
Then, $(V_n(t))$ is a positive supermartingale such that $\dE[V_n(t)] \leq 1$.
\end{lem}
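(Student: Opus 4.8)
The plan is to prove the supermartingale inequality $\dE[V_n(t)\mid\cF_{n-1}]\le V_{n-1}(t)$ directly, the positivity being obvious from the exponential form of \eqref{DEFVN}; the claim $\dE[V_n(t)]\le 1$ will then follow by iterating this bound down to $V_0(t)$, which equals $1$ since $M_0=[M]_0=\cM_0=0$. The single substantive ingredient will be the conditional version of the Hermite inequality \eqref{HERMITE} established in Lemma \ref{L-HERMITE}.

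First I would isolate the $\cF_{n-1}$-measurable factor. With $\Delta M_n=M_n-M_{n-1}$, so that $\Delta[M]_n=\Delta M_n^2$ and $\Delta\cM_n=\dE[\Delta M_n^2\mid\cF_{n-1}]$, the ratio of successive terms reads
\[
\frac{V_n(t)}{V_{n-1}(t)}=\exp\Bigl(t\Delta M_n-\frac{at^2}{2}\Delta M_n^2-\frac{b(a)t^2}{2}\Delta\cM_n\Bigr).
\]
Since $V_{n-1}(t)$ and $\Delta\cM_n$ are $\cF_{n-1}$-measurable, taking the conditional expectation gives
\[
\dE[V_n(t)\mid\cF_{n-1}]=V_{n-1}(t)\exp\Bigl(-\frac{b(a)t^2}{2}\Delta\cM_n\Bigr)\,\dE\Bigl[\exp\Bigl(t\Delta M_n-\frac{at^2}{2}\Delta M_n^2\Bigr)\Bigm|\cF_{n-1}\Bigr].
\]
The core step is to control the last conditional expectation. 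Rather than invoke Lemma \ref{L-HERMITE} through a regular conditional distribution, I would apply the pointwise inequality \eqref{HERMITE} with $x=t\Delta M_n$, namely
\[
\exp\Bigl(t\Delta M_n-\frac{at^2}{2}\Delta M_n^2\Bigr)\le 1+t\Delta M_n+\frac{b(a)t^2}{2}\Delta M_n^2,
\]
and then take conditional expectations. Using the martingale property $\dE[\Delta M_n\mid\cF_{n-1}]=0$ and $\dE[\Delta M_n^2\mid\cF_{n-1}]=\Delta\cM_n$ yields
\[
\dE\Bigl[\exp\Bigl(t\Delta M_n-\frac{at^2}{2}\Delta M_n^2\Bigr)\Bigm|\cF_{n-1}\Bigr]\le 1+\frac{b(a)t^2}{2}\Delta\cM_n.
\]
Substituting this back and applying the elementary inequality $(1+u)e^{-u}\le 1$, valid for every $u\ge 0$, with $u=\tfrac12 b(a)t^2\Delta\cM_n\ge 0$, collapses the product to at most $1$, so that $\dE[V_n(t)\mid\cF_{n-1}]\le V_{n-1}(t)$, which is precisely the supermartingale property.

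The main technical obstacle I anticipate is integrability: because $(M_n)$ is only \emph{locally} square integrable, the conditional moments $\dE[\Delta M_n^2\mid\cF_{n-1}]$ and the expectations above are not a priori finite. To make the manipulations rigorous I would first run the entire computation for the martingale stopped at a localizing stopping time $T_k\uparrow\infty$ reducing $(M_n)$ to a square integrable martingale, for which every step is justified, and then pass to the limit $k\to\infty$, recovering the supermartingale inequality and, via Fatou's lemma, the final bound $\dE[V_n(t)]\le\dE[V_0(t)]=1$. Apart from this localization, the argument is entirely driven by the conditional Hermite bound and the inequality $(1+u)e^{-u}\le 1$.
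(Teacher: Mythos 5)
Your proof is correct and takes essentially the same route as the paper: the identical ratio decomposition of $V_n(t)/V_{n-1}(t)$, the Hermite bound with constant $b(a)$ applied to the increment, and the elementary inequality $(1+u)e^{-u}\leq 1$ to absorb the compensator factor. Your two refinements---applying the pointwise inequality \eqref{HERMITE} under the conditional expectation instead of invoking the unconditionally-stated Lemma \ref{L-HERMITE} conditionally, and localizing to handle integrability---merely make rigorous what the paper's proof does informally.
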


\begin{proof}
For all $t \in \dR$ and for all $n\geq 1$,  we clearly have
\begin{equation*}
V_n(t)=V_{n-1}(t)\exp\Bigl(t\Delta M_n - \frac{at^2}{2}\Delta [M]_n - \frac{b(a)t^2}{2} \Delta \!\cM_n\Bigr),
\end{equation*}
where $\Delta M_n=M_n-M_{n-1}$, $\Delta [M]_n=\Delta M_n^2$ and 
$\Delta \!\cM_n=\dE[\Delta M_n^2|\cF_{n-1}]$. Hence, we deduce from
Lemma \ref{L-HERMITE} that for all $t\in \dR$,
\begin{eqnarray*}
\dE[V_n(t)|\cF_{n-1}] &\leq & V_{n-1}(t)\exp\Bigl( - \frac{b(a)t^2}{2}\Delta \!\cM_n\Bigr)\Bigl(1+\frac{b(a)t^2}{2}\Delta \! \cM_n\Bigr),\\
&\leq &  V_{n-1}(t)
\end{eqnarray*}
via the elementary inequality $1+x \leq \exp(x)$. 
Consequently, for all $t\in \dR$, $(V_n(t))$ is a positive supermartingale satisfying for all $n\geq 1$,
$\dE[V_n(t)]\leq \dE[V_{n-1}(t)]$, which implies that $\dE[V_n(t)]\leq \dE[V_0(t)]=1$. 
\end{proof}
\section*{Appendix B \\ Proofs of the main results}
\renewcommand{\thesection}{\Alph{section}}
\renewcommand{\theequation}{\thesection.\arabic{equation}}
\setcounter{section}{2}
\setcounter{subsection}{0}
\setcounter{equation}{0}

\noindent
{\bf Proof of Theorem \ref{T-CIWA}.}
For any positive $x$ and $y$, let
$$A_n=\Bigl\{|M_n|\geq x, aS_n(a) \leq y\Bigr\}$$
with $aS_n(a)= a[M]_n + b(a) \!\cM_n$. We have the decomposition
$A_n=A_n^+\cup A_n^-$ where 
$$
A_n^+ =\Bigl\{M_n\geq x, aS_n(a) \leq y\Bigr\} \hspace{1cm} \text{and} \hspace{1cm}
A_n^-=\Bigl\{M_n\leq \!-x, aS_n(a) \leq y\Bigr\}.
$$
It follows from Markov's inequality together with Lemma \ref{L-SUPERMG} that for all positive $t$,
\begin{eqnarray*}
\dP(A_n^+)&\leq&\dE\Bigl[\exp\Bigl(tM_n - tx\Bigr)\rI_{A_n^+}\Bigr],\\
&\leq &\dE\Bigl[\exp\Bigl(tM_n - \frac{t^2}{2}aS_n(a)\Bigr)\exp\Bigl(\frac{t^2}{2}aS_n(a)- tx \Bigr)\rI_{A_n^+}\Bigr],\\
&\leq &\exp\Bigl(\frac{t^2y}{2}- tx\Bigr) \dE[V_n(t)], \\
&\leq & \exp\Bigl(\frac{t^2y}{2}- tx\Bigr).
\end{eqnarray*}
Hence, by taking the optimal value $t=x/y$ in the above inequality, we find that
\begin{equation*}
\dP(A_n^+)\leq \exp\Bigl(-\frac{x^2}{2y}\Bigr).
\end{equation*}
We also obtain the same upper bound for $\dP(A_n^-)$ which ensures that
\begin{equation}
\label{UBPAN}
\dP(A_n)\leq 2 \exp\Bigl(-\frac{x^2}{2y}\Bigr).
\end{equation}
Finally, inequality \eqref{UBPAN} clearly leads to  \eqref{CIWA1} replacing $y$ by $ay$.
\hfill $\videbox$
\ \vspace{1ex} \\
\noindent
{\bf Proof of Theorem \ref{T-CISNWA}.}
For any positive $x$ and $y$, let 
$$B_n=\Bigl\{|M_n| \geq x S_n(a), S_n(a) \geq y \Bigr\}=B_n^+\cup B_n^-,$$
where 
$$ B_n^+ = \Bigl\{ M_n\geq xS_n(a), S_n(a) \geq y \Bigr \}
\hspace{0.5cm} \text{and} \hspace{0.5cm}
B_n^- = \Bigl\{ M_n \leq \!- xS_n(a), S_n(a) \geq y \Bigr \}.
$$
We have for all positive $t$,
\begin{eqnarray}
\label{UBPBN1}
\dP(B_n^+) &\leq & \dE\Bigl[\exp\Bigl(\frac{t}{2}M_n - \frac{tx}{2}S_n(a) \Bigr)\rI_{B_n^+}\Bigr], \nonumber \\
&\leq &\dE\Bigl[\exp\Bigl(\frac{t}{2}M_n - \frac{t^2}{4}aS_n(a)\Bigr)
\!\exp\Bigl(\frac{t}{4}(ta- 2x)S_n(a)\Bigr)\rI_{B_n^+}\!\Bigr]\!.
\end{eqnarray}
Consequently, we find from \eqref{UBPBN1} with the particular choice $t=x/a$ that
$$
\dP(B_n^+) \leq \exp\Bigl(-\frac{x^2y}{4a}\Bigr) \dE[ \sqrt{V_n(t)} \rI_{B_n^+}].
$$
Hence, we deduce from Cauchy-Schwarz's inequality together with Lemma \ref{L-SUPERMG} that
$$
\dP(B_n^+) \leq \exp\Bigl(-\frac{x^2y}{4a}\Bigr) \sqrt{\dP(B_n^+)}
$$
which leads to
\begin{equation}
\label{UBPBN2}
\dP(B_n^+)\leq \exp\Bigl(-\frac{x^2y}{2a}\Bigr).
\end{equation}
By the same token, we obtain the same upper bound holds for $\dP(B_n^-)$ which clearly implies \eqref{CISNWA1}.
Furthermore, for any positive $x$, let 
$$C_n=\Bigl\{|M_n|\geq x S_n(a) \Bigr\}=C_n^+\cup C_n^-,$$
where
$$
C_n^+ = \Bigl \{M_n\geq x S_n(a) \Bigr\}
\hspace{0.5cm} \text{and} \hspace{0.5cm}
C_n^- = \Bigl \{M_n\leq \!-x S_n(a)  \Bigr\}.
$$
By Holder's inequality, we have for all positive $t$ and $q>1$,
\begin{eqnarray}
\label{MGINEGDC1}
\dP(C_n^+)&\leq&\dE\Bigl[\exp\Bigl(\frac{t}{q}M_n - \frac{tx}{q}S_n(a)\Bigr)\rI_{C_n^+}\Bigr],\nonumber \\
&\leq &\dE\Bigl[\exp\Bigl(\frac{t}{q}M_n - \frac{t^2a}{2q}S_n(a)\Bigr) \exp\Bigl(\frac{t}{2q}(ta- 2x)S_n(a)\Bigr)\rI_{C_n^+}\Bigr],\nonumber \\
&\leq &\dE\Bigl[\bigl(V_n(t)\bigr)^{1/q} \exp\Bigl(\frac{t}{2q}(ta- 2x)S_n(a)\Bigr)\Bigr],\nonumber \\
&\leq &\left(\dE\Bigl[\exp\Bigl(\frac{tp}{2q}(ta- 2x)S_n(a)\Bigr)\Bigr]\right)^{1/p}\!\!.
\end{eqnarray}
Consequently, as $p/q=p-1$, we can deduce from \eqref{MGINEGDC1} with the optimal value $t=x/a$  
that
\begin{equation*}
\dP(C_n^+)\leq \inf_{p>1}\left(\dE\Bigl[\exp\Bigl(-\frac{(p-1)x^2S_n(a)}{2a}\Bigr)\Bigr]\right)^{1/p}\!\!\!\!.
\end{equation*}
We find the same upper bound for $\dP(C_n^-)$, completing the proof of Theorem \ref{T-CISNWA}.
\hfill $\videbox$
\ \vspace{1ex} \\
\noindent
{\bf Proof of Theorem \ref{T-CISNMISS}.} We already saw from Lemma \ref{L-SUPERMG} that for all $t \in \dR$,
$$
\dE\Bigl[ \exp\Bigl(tA_n-\frac{t^2}{2}B_n^2\Bigr)\Bigr] \leq 1
$$
where $A_n=M_n$ and $B_n^2=a[M]_n +b(a) \!\cM_n$.
It means that the pair of random variables $(A_n,B_n)$ safisties the canonical assumption in \cite{DLPPang2009}. 
Consequently, Theorem \ref{T-CISNMISS} immediately follows from Theorem 2.1 in \cite{DLPPang2009}.
\hfill $\videbox$


\begin{thebibliography}{10}

\bibitem{Azuma1967}
{\sc Azuma, K.}
\newblock Weighted sums of certain dependent random variables.
\newblock {\em Tohoku Math. J. 19\/} (1967), 357--367.

\bibitem{Bercu2001}
{\sc Bercu, B.}
\newblock On large deviations in the {G}aussian autoregressive process: stable,
  unstable and explosive cases.
\newblock {\em Bernoulli 7}, 2 (2001), 299--316.

\bibitem{BDR2015}
{\sc Bercu, B., Delyon, B., and Rio, E.}
\newblock {\em Concentration inequalities for sums and martingales}.
\newblock SpringerBriefs in Mathematics. Springer, Cham, 2015.

\bibitem{BercuGamboaRouault1997}
{\sc Bercu, B., Gamboa, F., and Rouault, A.}
\newblock Large deviations for quadratic forms of stationary {G}aussian
  processes.
\newblock {\em Stochastic Process. Appl. 71}, 1 (1997), 75--90.

\bibitem{BercuTouati2008}
{\sc Bercu, B., and Touati, A.}
\newblock Exponential inequalities for self-normalized martingales with
  applications.
\newblock {\em Ann. Appl. Probab. 18}, 5 (2008), 1848--1869.

\bibitem{BN2018}
{\sc Berenguer-Rico, V., and Nielsen, B.}
\newblock Marked and weighted empirical processes of residuals with
  applications to robust regressions.
\newblock {\em Submitted for Publication,\/} (2018).

\bibitem{BLM2013}
{\sc Boucheron, S., Lugosi, G., and Massart, P.}
\newblock {\em Concentration inequalities}.
\newblock Oxford University Press, Oxford, 2013.
\newblock A nonasymptotic theory of independence, With a foreword by Michel
  Ledoux.

\bibitem{CBCG2004}
{\sc Cesa-Bianchi, N., Conconi, A., and Gentile, C.}
\newblock On the generalization ability of on-line learning algorithms.
\newblock {\em IEEE Trans. Inform. Theory 50}, 9 (2004), 2050--2057.

\bibitem{CBG2008}
{\sc Cesa-Bianchi, N., and Gentile, C.}
\newblock Improved risk tail bounds for on-line algorithms.
\newblock {\em IEEE Trans. Inform. Theory 54}, 1 (2008), 386--390.

\bibitem{CBLugosi2006}
{\sc Cesa-Bianchi, N., and Lugosi, G.}
\newblock {\em Prediction, learning, and games}.
\newblock Cambridge University Press, Cambridge, 2006.

\bibitem{DLP1999}
{\sc de~la Pe{\~n}a, V.~H.}
\newblock A general class of exponential inequalities for martingales and
  ratios.
\newblock {\em Ann. Probab. 27}, 1 (1999), 537--564.

\bibitem{DLPKlassLai2007}
{\sc de~la Pe{\~n}a, V.~H., Klass, M.~J., and Lai, T.~L.}
\newblock Pseudo-maximization and self-normalized processes.
\newblock {\em Probab. Surv. 4\/} (2007), 172--192.

\bibitem{DLPPang2009}
{\sc de~la Pe{\~n}a, V.~H., and Pang, G.}
\newblock Exponential inequalities for self-normalized processes with
  applications.
\newblock {\em Electron. Commun. Probab. 14\/} (2009), 372--381.

\bibitem{Delyon2009}
{\sc Delyon, B.}
\newblock Exponential inequalities for sums of weakly dependent variables.
\newblock {\em Electron. J. Probab. 14\/} (2009), no. 28, 752--779.

\bibitem{DiaconisFulton1991}
{\sc Diaconis, P., and Fulton, W.}
\newblock A growth model, a game, an algebra, {L}agrange inversion, and
  characteristic classes.
\newblock {\em Rend. Sem. Mat. Univ. Politec. Torino 49}, 1 (1991), 95--119
  (1993).

\bibitem{DVZ2001}
{\sc Dzhaparidze, K., and van Zanten, J.~H.}
\newblock On {B}ernstein-type inequalities for martingales.
\newblock {\em Stochastic Process. Appl. 93}, 1 (2001), 109--117.

\bibitem{FGL2015}
{\sc Fan, X., Grama, I., and Liu, Q.}
\newblock Exponential inequalities for martingales with applications.
\newblock {\em Electron. J. Probab. 20}, 1 (2015), 1--22.

\bibitem{Freedman1965}
{\sc Freedman, D.~A.}
\newblock Bernard {F}riedman's urn.
\newblock {\em Ann. Math. Statist 36\/} (1965), 956--970.

\bibitem{Freedman1975}
{\sc Freedman, D.~A.}
\newblock On tail probabilities for martingales.
\newblock {\em Ann. Probability 3\/} (1975), 100--118.

\bibitem{Hazan2016}
{\sc Hazan, E.}
\newblock Introduction to online convex optimization.
\newblock {\em Foundations and Trends{\textregistered} in Optimization 2}, 3-4
  (2016), 157--325.

\bibitem{Hoeffding1963}
{\sc Hoeffding, W.}
\newblock Probability inequalities for sums of bounded random variables.
\newblock {\em J. Amer. Statist. Assoc. 58\/} (1963), 13--30.

\bibitem{Jo-Nielsen2016b}
{\sc Johansen, S.~r., and Nielsen, B.}
\newblock Analysis of the forward search using some new results for martingales
  and empirical processes.
\newblock {\em Bernoulli 22}, 2 (2016), 1131--1183.

\bibitem{Jo-Nielsen2016a}
{\sc Johansen, S.~r., and Nielsen, B.}
\newblock Asymptotic theory of outlier detection algorithms for linear time
  series regression models.
\newblock {\em Scand. J. Stat. 43}, 2 (2016), 321--348.

\bibitem{KS1998}
{\sc Kearns, M.~J., and Saul, L.}
\newblock Large deviation methods for approximate probabilistic inference.
\newblock {\em Proceedings of the 14th Conference on Uncertaintly in Artifical
  Intelligence\/} (1998), 311--319.

\bibitem{Lawler1995}
{\sc Lawler, G.~F.}
\newblock Subdiffusive fluctuations for internal diffusion limited aggregation.
\newblock {\em Ann. Probab. 23}, 1 (1995), 71--86.

\bibitem{LBG1992}
{\sc Lawler, G.~F., Bramson, M., and Griffeath, D.}
\newblock Internal diffusion limited aggregation.
\newblock {\em Ann. Probab. 20}, 4 (1992), 2117--2140.

\bibitem{Pinelis2014}
{\sc Pinelis, I.}
\newblock On the {B}ennett-{H}oeffding inequality.
\newblock {\em Ann. Inst. Henri Poincar\'e Probab. Stat. 50}, 1 (2014), 15--27.

\bibitem{Rio2013b}
{\sc Rio, E.}
\newblock Extensions of the {H}oeffding-{A}zuma inequalities.
\newblock {\em Electron. Commun. Probab. 18\/} (2013), no. 54, 6.

\bibitem{White1958}
{\sc White, J.~S.}
\newblock The limiting distribution of the serial correlation coefficient in
  the explosive case.
\newblock {\em Ann. Math. Statist. 29\/} (1958), 1188--1197.

\end{thebibliography}
\end{document}